\newtheorem{theorem}{Theorem}
\newtheorem{corollary}[theorem]{Corollary}
\newtheorem{definition}[theorem]{Definition}
\newtheorem{lemma}[theorem]{Lemma}
\newtheorem{proposition}[theorem]{Proposition}
\newtheorem{remark}[theorem]{Remark}
\newtheorem{example}[theorem]{Example}
\newenvironment{proof}[1][Proof]{\noindent\textbf{#1. }}{\hfill \rule{0.5em}{0.5em} \medskip\newline }
\newcommand{\dE}{\mathbb{E}}
\newcommand{\TXs}[1]{\overline{T}_{X,#1}}
\newcommand{\Ts}[2]{\overline{T}_{#1,#2}}
\newcommand{\TYs}[1]{\overline{T}_{Y,#1}}
\newcommand{\muXs}[1]{\widetilde{\mu}_{X,#1}}
\newcommand{\sFR}[1]{\ensuremath{#1\!-\!}{\rm FR}}
\title{
On the limit behavior of iterated equilibrium distributions for the Gamma and Weibull families\thanks{This work was partially supported by the Centre for Mathematics of the University of Coimbra -- UID/MAT/00324/2013, funded by the Portuguese Government through FCT/MEC and co-funded by the European Regional Development Fund through the Partnership Agreement PT2020.}}
\author[1]{Idir Arab}
\author[2]{Milto Hadjikyriakou}
\author[1]{Paulo Eduardo Oliveira\footnote{Corresponding author.}}
\affil[1]{CMUC, Department of Mathematics, University of Coimbra, Portugal}
\affil[2]{University of Central Lancashire, Cyprus}
\date{}
\begin{document}

\maketitle

\begin{abstract}
In this paper, we study the evolution of iterated equilibrium distributions for the Gamma and Weibull families of distributions as the iteration step increases. 
We characterize their moments and the pointwise limit of the distribution functions corresponding to the iterated distributions. As a byproduct, we obtain approximations for higher order moments of the residual lifetime.

\medskip

\noindent\textbf{Key words and phrases}: iterated Gamma distribution, iterated Weibull distribution, stop-loss transform, $\sFR{s}$ order.

\medskip

\noindent\textbf{AMS 2010 subject classification}: 60E15, 62E20

\medskip

\noindent\textbf{JEL classification}: C02, C46, C34
\end{abstract}

\section{Introduction}
Iterated distributions were introduced by Averous and Meste~\cite{AM89}, in order to construct a classification of lifetime distributions, and were initially studied in a more systematic way by Fagiuoli and Pellerey~\cite{FP93}. The iteration procedure described below produces what is also known as equilibrium distributions in economics or actuarial activities, since they describe the distribution of the first drop below the initial reserve. Moreover, equilibrium distributions play an important role in ageing relations (see, for example, Chatterjee and Mukherjee~\cite{CM01}) or in renewal theory (see Cox~\cite{Cox62}). A suitable representation of the iterated distributions describes their tails as normalized moments of stop-loss risk premiums with a given deductible, a common type of contract in actuarial activity, with interest to the characterization of ruin probabilities and insolvency. We obtain a characterization of the asymptotics of these moments, with respect to their order, for initial random variables with distribution in the Gamma or Weibull families. These results provide simple numerical approximations for the stop-loss premiums.

Let us describe our framework and introduce the basic notation and transformations. We will be assuming throughout that $X$ is a nonnegative random variable with density function $f_X$, distribution function $F_X$, and tail function $\overline{F}_X=1-F_X$.
\begin{definition}
\label{def:s-iter}
For each $x\geq 0$, define
\begin{equation}
\TXs{0}(x)=f_{X}(x)\quad \mbox{and}\quad \muXs{0}=\int_0^\infty \TXs{0}(t)\,dt=1.
\end{equation}
For each $s\geq 1$, define the $s-$iterated distribution induced by $X$, $T_{X,s}$ by its tail $\TXs{s}=1-T_{X,s}$ as follows:
\begin{equation}
\label{eq:s-iter}
\TXs{s}(x)=\frac{1}{\muXs{s-1}}\int_{x}^\infty\TXs{s-1}(t)\,dt
\quad \mbox{where}\quad \muXs{s}=\int_0^\infty \TXs{s}(t)\,dt,
\end{equation}
assuming the integrals above are finite.
\end{definition}
The distribution $\TXs{2}$ is known as the equilibrium distribution of $X$. Hence, the iteration process above defines, for each $s\geq 1$, the equilibrium distribution $\TXs{s}$ of a random variable with tail $\TXs{s-1}$. Thus, the iteration procedure given in Definition~\ref{def:s-iter} may be restated in terms of the iterated equilibrium distribution.
\begin{definition}
The $(s-1)-$iterated equilibrium distribution of $X$ has tail given by $\TXs{s}$, that is, the $s-$iterated distribution induced by $X$ described in Definition~\ref{def:s-iter}.
\end{definition}
Taking into account this identification, we shall refer throughout to iterated distributions instead of iterated equilibrium, as these only differ on the count of the iteration steps.

It is easily verified that the $s-$iterated distribution induced by an exponential random variable $X$ is exactly the same exponential distribution as $X$. That is, the exponential distributions are fixed points with respect to the iteration procedure introduced in Definition~\ref{def:s-iter}. Moreover, it is also easily verified that explicit identification of the iterated distributions may become in general quite complex once we leave the case of $X$ being exponentially distributed.

As already mentioned, iterated distributions were used by Averous and Meste~\cite{AM89} to classify the distribution of $X$ with respect to its tail behavior. General properties of the iterated distributions were studied in Fagiuoli and Pellerey~\cite{FP93} and Nanda et al.~\cite{ASOK}, but to the best of our knowledge, the behavior of the iterated distributions with respect to the iteration step has not been studied before. In this paper, we will be mainly interested on this behavior of iterated distributions as the iteration step grows.

Although the iterated distributions are defined in a recursive way, the following theorem gives a useful closed form representation.
\begin{theorem}[Lemma 2 and Remark 3 in Arab and Oliveira~\cite{AO18}]
\label{Simple T_s}
Assume $X$ is an absolutely continuous nonnegative random variable with finite moment of order $s-1$ for $s=1,2,\ldots$. Then the $s-$iterated tail $\TXs{s}$ may be represented as
\begin{equation}
\label{eq:simpleTs}
\TXs{s}(x)=\frac{1}{\dE X^{s-1}}\int_x^\infty f_{X}(t)(t-x)^{s-1}\,dt=\frac{1}{\dE X^{s-1}}\dE(X-x)_+^{s-1},
\end{equation}
where $(X-x)_+=\max(0,X-x)$ is the residual lifetime at age $x$. 
\end{theorem}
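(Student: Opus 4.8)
The plan is to argue by induction on $s$, using Tonelli's theorem to interchange the order of integration at each step. For the base case $s=1$, Definition~\ref{def:s-iter} gives $\TXs{1}(x)=\frac{1}{\muXs{0}}\int_x^\infty\TXs{0}(t)\,dt=\int_x^\infty f_X(t)\,dt=\overline{F}_X(x)$, while the right-hand side of~\eqref{eq:simpleTs} reads $\frac{1}{\dE X^{0}}\int_x^\infty f_X(t)(t-x)^{0}\,dt=\int_x^\infty f_X(t)\,dt$, so the two expressions agree (recall $\dE X^{0}=1$).

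For the inductive step, suppose~\eqref{eq:simpleTs} holds at level $s-1$. First I would compute the normalizing constant $\muXs{s-1}=\int_0^\infty\TXs{s-1}(t)\,dt$: substituting the induction hypothesis and swapping the two integrals (legitimate since the integrand is a product of nonnegative functions), the inner integral $\int_0^u(u-t)^{s-2}\,dt=\frac{u^{s-1}}{s-1}$ yields $\muXs{s-1}=\frac{\dE X^{s-1}}{(s-1)\dE X^{s-2}}$; in particular this quantity is finite under the standing assumption, so the iteration is well defined. Then, applying the recursion~\eqref{eq:s-iter}, inserting the induction hypothesis once more and again interchanging integrals, the inner integral is $\int_x^u(u-t)^{s-2}\,dt=\frac{(u-x)^{s-1}}{s-1}$, and the factors of $s-1$ and $\dE X^{s-2}$ cancel against $1/\muXs{s-1}$, leaving exactly $\frac{1}{\dE X^{s-1}}\int_x^\infty f_X(u)(u-x)^{s-1}\,du$, which is the first equality in~\eqref{eq:simpleTs}. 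The second equality is merely the definition of $(X-x)_+$ together with the transfer formula for expectations.

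The only points requiring care are bookkeeping: keeping the chain of constants $\muXs{0},\muXs{1},\dots,\muXs{s-1}$ straight, and confirming at each stage that the relevant moment $\dE X^{k}$ is finite, which follows from finiteness of $\dE X^{s-1}$ and nonnegativity of $X$ (all lower-order moments are then finite as well). The analytic content — the interchange of integrals — is immediate from Tonelli because every integrand in sight is nonnegative, so I do not expect a real obstacle there; the main thing to get right is the elementary identity $\int_a^b(u-t)^{k}\,dt=\frac{(u-a)^{k+1}-(u-b)^{k+1}}{k+1}$ applied with the correct endpoints in its two uses.
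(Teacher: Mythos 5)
The paper does not include a proof of this theorem: it is stated as a citation to Lemma~2 and Remark~3 of Arab and Oliveira~\cite{AO18}, so there is no ``paper's own proof'' to compare against directly. That said, your inductive argument is correct and is the canonical derivation of this closed form: the base case $s=1$ checks, the computation $\muXs{s-1}=\dE X^{s-1}/\bigl((s-1)\dE X^{s-2}\bigr)$ obtained by Tonelli (which also reproduces identity~(\ref{eq:moms}) of the paper) is right, and in the inductive step the inner integral $\int_x^u(u-t)^{s-2}\,dt=(u-x)^{s-1}/(s-1)$ combines with $1/\muXs{s-1}$ and $1/\dE X^{s-2}$ to leave exactly $1/\dE X^{s-1}$, as you say. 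Your observations on finiteness of the lower-order moments and on the validity of the interchange via nonnegativity are the right justifications. The only cosmetic point worth flagging is that the shorthand $\dE(X-x)_+^{s-1}$ at $s=1$ tacitly uses the convention $(X-x)_+^0=\dI_{\{X>x\}}$ (so that the expectation equals $\overline F_X(x)$ rather than $1$), which matches the unambiguous integral form $\int_x^\infty f_X(t)(t-x)^0\,dt$ and is standard in this literature; it would cost nothing to state it explicitly.
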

\begin{remark}
It follows from (\ref{eq:simpleTs}) that the $s-$iterated distribution may be interpreted as a normalized survival moment of order $s-1$. It is also worth mentioning that (\ref{eq:simpleTs}) means that, up to a normalizing factor, the $s-$iterated distribution is the stop-loss transform of order $s-1$, $\dE(X-x)_+^{s-1}$, a common quantity of interest in actuarial models (see, for example, Cheng and Pai~\cite{CP03}, Nair et al.~\cite{NSS13}, Tsai~\cite{Tsai06} or Rachev and R\"{u}schendorf~\cite{RR90}, among many other references).
\end{remark}

Although Theorem~\ref{Simple T_s} provides a closed representation for $\TXs{s}$, it does not seem to be very helpful for actual calculations. An illustrative example, that we will be exploring in detail later, is obtained by assuming that $X$ has distribution in the Gamma or Weibull families, which are important classes of distributions in many different research fields such as reliability theory.



The paper is organized as follows: In Section 2, we provide a formula for the higher order moments of the iterated distributions. In Section 3, a recursive representation for the high order iterated distributions of a convolution is obtained and this result is used to find an expression for the iterated distributions of a random variable that is Gamma distributed with integer shape parameter. In Section 4, we derive an explicit expression for the $s-$iterated distribution of a Gamma distributed random variable, still assuming the shape parameter is an integer and furthermore, we study the limit behavior of the iterated distribution as the iteration step tends to infinity, now for general shape parameter. Finally in Section 5, we identify the limit behavior for the iterated distributions induced from the Weibull family of distributions.

\section{Moments of iterated distributions}
We shall start by a characterization of the moments of iterated distributions. First, remark that the normalizing constants $\muXs{s}$ in (\ref{eq:s-iter}) are obviously the mathematical expectation of the $s-$iterated distributions. These first order moments have been characterized, in terms of moments of the inducing random variable, in Corollary~2.1 by Nanda et al.~\cite{NJS96}:
\begin{equation}
\label{eq:moms}
\muXs{s}=\frac{1}{s}\frac{\dE X^{s}}{\dE X^{s-1}}.
\end{equation}

The following result extends (\ref{eq:moms}), characterizing higher order moments of the iterated distributions.
\begin{proposition}
Assume $X$ is an absolutely continuous nonnegative random variable with finite moment of order $m+s-1$, where $m\geq 1$ and $s\geq 2$. Then the $s-$iterated distribution induced by $X$ has a finite moment of order $m$ given by
$$
\mu_{s,m}={m+s-1\choose m}^{-1}\,\frac{\dE X^{m+s-1}}{\dE X^{s-1}}.
$$
\end{proposition}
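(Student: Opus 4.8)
The plan is to compute the $m$-th moment of the $s$-iterated distribution directly from its tail using the identity $\mu_{s,m}=\int_0^\infty m x^{m-1}\,\TXs{s}(x)\,dx$, which holds for a nonnegative random variable with tail $\TXs{s}$. Substituting the closed-form representation of Theorem~\ref{Simple T_s}, namely $\TXs{s}(x)=\frac{1}{\dE X^{s-1}}\int_x^\infty f_X(t)(t-x)^{s-1}\,dt$, gives
$$
\mu_{s,m}=\frac{1}{\dE X^{s-1}}\int_0^\infty m x^{m-1}\left(\int_x^\infty f_X(t)(t-x)^{s-1}\,dt\right)dx.
$$
Since the integrand is nonnegative and $X$ has a finite moment of order $m+s-1$, Tonelli's theorem justifies swapping the order of integration, yielding
$$
\mu_{s,m}=\frac{1}{\dE X^{s-1}}\int_0^\infty f_X(t)\left(\int_0^t m x^{m-1}(t-x)^{s-1}\,dx\right)dt.
$$

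The inner integral is a Beta-type integral: with the change of variable $x=tu$ it becomes $t^{m+s-1}\int_0^1 m u^{m-1}(1-u)^{s-1}\,du = m\,t^{m+s-1} B(m,s) = m\,t^{m+s-1}\frac{\Gamma(m)\Gamma(s)}{\Gamma(m+s)}$. Using $m\Gamma(m)=\Gamma(m+1)$ and rewriting in terms of factorials, $m B(m,s)=\frac{m!\,(s-1)!}{(m+s-1)!}=\binom{m+s-1}{m}^{-1}$. Plugging this back in,
$$
\mu_{s,m}=\frac{1}{\dE X^{s-1}}\binom{m+s-1}{m}^{-1}\int_0^\infty f_X(t)\,t^{m+s-1}\,dt=\binom{m+s-1}{m}^{-1}\frac{\dE X^{m+s-1}}{\dE X^{s-1}},
$$
which is the claimed formula; finiteness of $\mu_{s,m}$ follows since $\dE X^{m+s-1}<\infty$ by hypothesis.

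The only real subtlety is the justification of the Fubini/Tonelli interchange and, relatedly, checking that $\mu_{s,m}=\int_0^\infty m x^{m-1}\TXs{s}(x)\,dx$ in the first place — i.e. that $\TXs{s}$ genuinely is a valid tail function (it is, being the tail of the $(s-1)$-iterated equilibrium distribution, which requires the finite moment of order $s-1$ guaranteed by the hypothesis). Both points are handled cleanly by working with nonnegative integrands throughout and invoking Tonelli, so there is no genuine obstacle; the computation of the Beta integral is routine. One could alternatively sanity-check the result against the known first-order case: setting $m=1$ recovers $\mu_{s,1}=\binom{s}{1}^{-1}\frac{\dE X^{s}}{\dE X^{s-1}}=\frac{1}{s}\frac{\dE X^{s}}{\dE X^{s-1}}$, matching (\ref{eq:moms}).
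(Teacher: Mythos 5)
Your proof is correct. It uses the same underlying mechanism as the paper — rewrite the moment as a double integral, swap the order of integration by Tonelli, and evaluate a Beta integral — with the only difference being the starting point: you use the tail-integration identity $\mu_{s,m}=\int_0^\infty m x^{m-1}\TXs{s}(x)\,dx$ together with the stop-loss representation of order $s-1$, whereas the paper integrates $x^m$ against the density of the $s$-iterated distribution (which, via~(\ref{eq:density}), is proportional to the stop-loss transform of order $s-2$). The resulting inner integrals are different Beta integrals ($B(m,s)$ with an extra factor $m$ in your case versus $B(m+1,s-1)$ with a factor $s-1$ in the paper's), but they collapse to the same coefficient $\Gamma(m+1)\Gamma(s)/\Gamma(m+s)=\binom{m+s-1}{m}^{-1}$, so the two derivations are essentially interchangeable. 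Your closing remarks about Tonelli and about the sanity check against~(\ref{eq:moms}) at $m=1$ are both sound and slightly more careful than what the paper spells out.
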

\begin{proof}
It follows from (\ref{eq:s-iter}) that the $s-$iterated distribution has density $\frac{1}{\muXs{s-1}}\TXs{s-1}$. So, using the first representation for $\TXs{s-1}$ in (\ref{eq:simpleTs}), we have
$$
\mu_{s,m}=\frac{s-1}{\dE X^{s-1}}\int_0^\infty\!\int_x^\infty x^m(t-x)^{s-2}f(t)\,dt\,dx.
$$
By inverting the integration order, we easily find that
$$
\mu_{s,m}=\frac{s-1}{\dE X^{s-1}}\int_0^\infty\!\int_0^t x^m(t-x)^{s-2}f(t)\,dx\,dt
=\frac{s-1}{\dE X^{s-1}}\frac{\Gamma(m+1)\Gamma(s-1)}{\Gamma(m+s)}\int_0^\infty t^{m+s-1}f(t)\,dt,
$$
where $\Gamma$ is the Euler function, which leads to the desired result.
\end{proof}
The following description for the variance of iterated distributions is now straightforward.
\begin{corollary}
Assume $X$ is an absolutely continuous nonnegative random variable with finite moment of order $s+1$. The variance of the $s-$iterated distribution induced by X is
$$
\sigma^2_s=\frac{1}{s}\frac{\dE X^s}{\dE X^{s-1}}
\left(\frac{2}{s+1}\frac{\dE X^{s+1}}{\dE X^s}-\frac{1}{s}\frac{\dE X^s}{\dE X^{s-1}}\right).
$$
\end{corollary}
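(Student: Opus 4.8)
The plan is to compute the variance straight from its definition, $\sigma_s^2=\mu_{s,2}-\mu_{s,1}^2$, where $\mu_{s,1}$ and $\mu_{s,2}$ are the first and second moments of the $s-$iterated distribution, and then to simplify.

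First I would identify the first moment using (\ref{eq:moms}): $\mu_{s,1}=\muXs{s}=\frac1s\frac{\dE X^s}{\dE X^{s-1}}$, which needs only a finite moment of order $s$. Next I would apply the Proposition with $m=2$; since $m+s-1=s+1$ and $X$ is assumed to have a finite moment of order $s+1$, the hypotheses of the Proposition are met and it yields $\mu_{s,2}={s+1\choose 2}^{-1}\frac{\dE X^{s+1}}{\dE X^{s-1}}=\frac{2}{s(s+1)}\frac{\dE X^{s+1}}{\dE X^{s-1}}$.

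It then remains to substitute and rearrange: $\sigma_s^2=\frac{2}{s(s+1)}\frac{\dE X^{s+1}}{\dE X^{s-1}}-\frac{1}{s^2}\left(\frac{\dE X^s}{\dE X^{s-1}}\right)^2$, and factoring out the positive quantity $\frac1s\frac{\dE X^s}{\dE X^{s-1}}$ gives exactly the claimed formula. This is a purely elementary algebraic step.

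There is essentially no genuine obstacle; the only matters worth a brief comment are that the hypothesis of a finite moment of order $s+1$ is precisely what licenses the application of the Proposition with $m=2$ (and a fortiori makes $\mu_{s,1}$ finite), and that although the Proposition was stated for $s\geq 2$, the boundary case $s=1$ degenerates to the ordinary variance of $X$, for which the formula still holds. The whole proof amounts to two substitutions followed by factoring.
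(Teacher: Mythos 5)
Your proof is correct and is exactly the "straightforward" computation the paper alludes to: take $\mu_{s,1}$ from (\ref{eq:moms}) (or equivalently the Proposition with $m=1$), take $\mu_{s,2}$ from the Proposition with $m=2$, form $\mu_{s,2}-\mu_{s,1}^2$, and factor out $\frac1s\frac{\dE X^s}{\dE X^{s-1}}$. Your remarks about the moment hypothesis and the degenerate case $s=1$ are accurate and add nothing controversial.
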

As mentioned earlier, the exponential distributions are fixed points for the iteration procedure, so one could expect to have iterated distributions converging to the exponential. The above corollary shows that this may not happen. Indeed, it follows immediately that if the quotient $\frac{\dE X^s}{\dE X^{s-1}}$ is bounded with respect to $s\geq 1$, then $\lim_{s\rightarrow+\infty}\sigma^2_s=0$, implying that the equilibrium distributions are, in such cases, converging to a degenerate distribution. We will prove later that the Weibull distributions satisfy this asymptotic degeneracy.

\section{Iterated distribution of convolutions}
Summing independent random variables is a common way to introduce new families of distributions, appearing as convolution powers built upon some initial distribution. With this in mind, we shall derive a characterization for the iterated distribution for the $n$-th convolution power based on expressions for the $(n-1)$-th convolution power. For this purpose, it is convenient to describe the distribution in terms of their densities rather than using distribution functions. Of course, as follows from Definition~\ref{def:s-iter}, the density $f_s$ for the $s-$iterated distribution is, up to multiplication by a constant, the tail for the $(s-1)-$iterated distribution:
%
%
%
\begin{equation}
\label{eq:density}
f_s(x) = \frac{1}{\muXs{s-1}}\TXs{s-1}(x)= (s-1)\frac{E(X-x)_{+}^{s-2}}{EX^{s-1}}=\frac{(s-1)}{EX^{s-1}}\int_{x}^{\infty}(t-x)^{s-2}f(t)\,dt.
\end{equation}
In the sequel, we will be representing the convolution of density functions by $\ast$ defined as follows:
\[
f\ast g (x) =\int_{0}^{x}f(t)g(x-t)\,dt.
\]
Moreover, the expression $f^{n\ast}$ will represent the $n$-th convolution power of a density function $f$.
\begin{theorem}
\label{thm:conv}
Let $X_1,\ldots,X_n$ be nonnegative i.i.d random variables with the same distribution as $X$ and define $S_n =X_1+\cdots+X_n$. For every $s,n\geq2$, the density of the $s-$iterated distribution induced by $S_n$ is given by
\begin{equation}
\label{eq:conv}
f_s^{n*}(x)=\frac{\mu^{(n-1)*}_{s-1}}{\mu^{n*}_{s-1}}f\ast f_{s}^{(n-1)*}(x)+\frac{1}{\mu^{n*}_{s-1}}\sum_{\ell=1}^{s-1}{{s-1}\choose {\ell}}\mu^{(n-1)*}_{s-\ell-1}\mu_{\ell}f_{\ell+1}(x),
\end{equation}
where $\mu^{n*}_s= \dE S_n^s$ and $\mu_s =\dE X^s$.
\end{theorem}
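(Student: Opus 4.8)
The plan is to start from the closed-form density representation \eqref{eq:density} applied to $S_n$, namely $f_s^{n*}(x)=\frac{(s-1)}{\mu_{s-1}^{n*}}\int_x^\infty (t-x)^{s-2}f^{n*}(t)\,dt$, and exploit the convolution structure $f^{n*}=f\ast f^{(n-1)*}$. The guiding idea is to split the inner $t$-integral at the "last block": writing $t=u+v$ with $u$ contributed by $X_n$ (density $f$) and $v$ by $S_{n-1}$ (density $f^{(n-1)*}$), one obtains a double integral over the region $u+v\ge x$, $u,v\ge 0$. Then decompose this region into the part where $v\ge x$ (which will regenerate the term $f\ast f_s^{(n-1)*}$) and the complementary part $0\le v<x$, $u\ge x-v$ (which will produce the correction sum). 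So the first step is to carry out this region splitting and integrate over $u$ in each piece.

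Second, in the piece $v\ge x$ the inner integral over $u$ of $(u+v-x)^{s-2}f(u)$ cannot be collapsed directly, so the key algebraic move is the binomial expansion $(u+v-x)^{s-2}=\big((v-x)+u\big)^{s-2}=\sum_{k}\binom{s-2}{k}(v-x)^{s-2-k}u^k$ — or, more cleanly, one works instead on the complementary piece and expands there. In the piece $0\le v<x$ we have $u\ge x-v$, and we expand $(u+v-x)^{s-2}=\sum_{\ell=0}^{s-2}\binom{s-2}{\ell}(u-(x-v))^{?}\ldots$; the right bookkeeping is to recognize $\int_{x-v}^\infty (u-(x-v))^{?}f(u)\,du$ as a lower-order iterated tail/density of $X$ evaluated at $x-v$, and $\int_0^x v^{j} f^{(n-1)*}(v-\cdot)\,dv$-type integrals as lower-order moments $\mu_{s-\ell-1}^{(n-1)*}$ convolved appropriately. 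Matching the combinatorial coefficients $\binom{s-1}{\ell}$ against $\binom{s-2}{\ell-1}$ and the factor $(s-1)$ via the identity $\frac{s-1}{\ell}\binom{s-2}{\ell-1}=\binom{s-1}{\ell}$ will be needed; this reindexing, together with the normalizations $\mu_\ell = \dE X^\ell$ and $\mu_{s-\ell-1}^{(n-1)*}=\dE S_{n-1}^{s-\ell-1}$, is what converts the raw double integral into the stated form, where each term $\mu_\ell f_{\ell+1}(x)$ is exactly $\dE X^\ell$ times $\frac{\ell}{\dE X^\ell}\dE(X-x)_+^{\ell-1}=\ell\,\dE(X-x)_+^{\ell-1}$ by \eqref{eq:density}, i.e.\ a lower-order stop-loss quantity of the base variable $X$.

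Third, for the main term: in the region $v\ge x$, after integrating out $u$ over all of $[0,\infty)$ one is left (modulo the correction already extracted) with $\frac{(s-1)}{\mu_{s-1}^{n*}}\int_0^\infty f(u)\int_{\max(x,\cdot)}^\infty(\ldots)\,dv\,du$, and one wants to recognize the $v$-integral as $\mu_{s-1}^{(n-1)*}$ times the $(s-1)$-iterated density $f_s^{(n-1)*}$ of $S_{n-1}$, again via \eqref{eq:density}; the prefactor $\mu_{s-1}^{(n-1)*}/\mu_{s-1}^{n*}$ then appears upon renormalizing from the $S_{n-1}$-normalization to the $S_n$-normalization. The arithmetic here is routine once the regions are set up correctly.

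The main obstacle I expect is precisely the bookkeeping in the correction sum: keeping track of which age the lower-order iterated quantities of $X$ versus $S_{n-1}$ are evaluated at (there is a hidden convolution in $x$, since $x = (x-v)+v$), ensuring the binomial/Beta-function constants collapse to the clean coefficients $\binom{s-1}{\ell}$, and handling the boundary/degenerate indices ($\ell=s-1$, where $\mu^{(n-1)*}_0=1$, and the lower limit of the sum). A clean way to manage this is to prove the identity first for $S_2$ (i.e.\ $f_s^{2*}$ in terms of $f$ and $f_s$) by the region-splitting argument above, and then obtain the general $n$ by the same computation with $f^{(n-1)*}$ in place of one copy of $f$ — or, alternatively, by induction on $n$ using the semigroup property $f^{n*}=f\ast f^{(n-1)*}$ — so that the only genuine combinatorial work, the binomial expansion and reindexing, is done once.
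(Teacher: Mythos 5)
Your overall strategy — start from (\ref{eq:density}) applied to $S_n$, insert $f^{n*}=f\ast f^{(n-1)*}$, swap the order of integration, binomially expand, and recognize stop-loss moments and iterated densities — is exactly the paper's. The gap is in the region split you commit to, and in the attribution of which piece produces which term. You split $\{u,v\ge0:\,u+v\ge x\}$ at $v=x$ (the $S_{n-1}$-coordinate) and claim the $v\ge x$ piece regenerates $f\ast f_s^{(n-1)*}$ while the complement yields the correction sum; neither is so. On $\{v\ge x,\,u\ge0\}$ the expansion $(u+v-x)^{s-2}=\sum_k\binom{s-2}{k}(v-x)^{s-2-k}u^k$ together with (\ref{eq:density}) for $S_{n-1}$ produces $\frac{1}{\mu^{n*}_{s-1}}\sum_{k=0}^{s-2}\binom{s-1}{k}\mu_k\,\mu^{(n-1)*}_{s-1-k}\,f^{(n-1)*}_{s-k}(x)$, a sum of iterated densities of $S_{n-1}$, not of $X$; while on $\{0\le v<x,\ u\ge x-v\}$ the inner $u$-integral collapses via (\ref{eq:density}) to $\frac{\mu_{s-1}}{s-1}f_s(x-v)$, and the $v$-integral gives $\frac{\mu_{s-1}}{\mu^{n*}_{s-1}}\,(f^{(n-1)*}\ast f_s)(x)$. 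Your split therefore proves the mirror identity with $f^{(n-1)*}\ast f_s$ in place of $f\ast f^{(n-1)*}_s$ and with $S_{n-1}$-iterates in the sum — a valid identity, but not (\ref{eq:conv}).

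The fix is to split at $u=x$ (the $X_n$-coordinate), which is what the paper does. For $u\in[0,x]$ the constraint forces $v\ge x-u$, and (\ref{eq:density}) for $S_{n-1}$ gives the inner integral $\frac{\mu^{(n-1)*}_{s-1}}{s-1}f^{(n-1)*}_s(x-u)$, producing the term $\frac{\mu^{(n-1)*}_{s-1}}{\mu^{n*}_{s-1}}\,(f\ast f^{(n-1)*}_s)(x)$ after integrating in $u$ over $[0,x]$. For $u\ge x$ the $v$-integral runs over all of $[0,\infty)$; expanding $(v+u-x)^{s-2}$ yields full moments $\mu^{(n-1)*}_{s-2-\ell}$ of $S_{n-1}$ and, via $\int_x^\infty(u-x)^\ell f(u)\,du=\frac{\mu_{\ell+1}}{\ell+1}f_{\ell+2}(x)$ and the shift $\ell\mapsto\ell+1$ with $\frac{s-1}{\ell+1}\binom{s-2}{\ell}=\binom{s-1}{\ell+1}$, exactly the stated correction sum. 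The rest of your outline (the reindexing identity, the normalizations, the boundary index $\ell=s-1$) is fine once the split is made in the $u$-variable.
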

\begin{proof}
Using (\ref{eq:density}) for the $n$-th convolution power and reversing the integration order, we find that
\begin{eqnarray}
\nonumber f_{s}^{n*}(x)&=&\frac{(s-1)}{\mu^{n*}_{s-1}}\int_{x}^{\infty}(t-x)^{s-2}f^{n*}(t)\,dt\\
&=& \nonumber \frac{(s-1)}{\mu^{n*}_{s-1}}\int_{x}^{\infty}(t-x)^{s-2}\left(\int_{0}^{t}f(u)f^{(n-1)*}(t-u)\,du\right)\,dt\\
&=&\nonumber \frac{(s-1)}{\mu^{n*}_{s-1}}\int_{0}^{x}f(u)\left(\int_{x}^{\infty}f^{(n-1)*}(t-u)(t-x)^{s-2}\,dt\right)\,du\\
& & \qquad+\nonumber \frac{(s-1)}{\mu^{n*}_{s-1}}\int_{x}^{\infty}f(u)\left(\int_{u}^{\infty}f^{(n-1)*}(t-u)(t-x)^{s-2}\,dt\right)\,du\\
&=&\nonumber\frac{(s-1)}{\mu^{n*}_{s-1}}\int_{0}^{x}f(u)\left(\int_{x-u}^{\infty}f^{(n-1)*}(v)(v+u-x)^{s-2}\,dv\right)\,du\\
& & \qquad+ \nonumber \frac{(s-1)}{\mu^{n*}_{s-1}}\int_{x}^{\infty}f(u)\left(\int_{0}^{\infty}f^{(n-1)*}(t)(t+u-x)^{s-2}\,dt\right)\,du\\
&=&\label{int} I_1+I_2.
\end{eqnarray}
We rewrite $I_1$ as
\begin{equation}
\label{int1}
I_1=\frac{(s-1)}{\mu^{n*}_{s-1}}\int_{0}^{x}f(u)\frac{\mu^{(n-1)*}_{s-1}}{(s-1)}f_{s}^{(n-1)*}(x-u)\,du
=\frac{\mu^{(n-1)*}_{s-1}}{\mu^{n*}_{s-1}}f\ast f_{s}^{(n-1)*}(x).
\end{equation}
As what regards $I_2$, we have that
\begin{eqnarray}
\nonumber I_2&=&
   \frac{(s-1)}{\mu^{n*}_{s-1}}\int_{x}^{\infty}f(u)\left(\int_{0}^{\infty}f^{(n-1)*}(t)(t+u-x)^{s-2}\,dt\right)\,du\\
\nonumber &=&
   \frac{(s-1)}{\mu^{n*}_{s-1}}\int_{x}^{\infty}f(u)\left(\sum_{\ell=0}^{s-2}{{s-2}\choose{\ell}}(u-x)^{\ell}\int_{0}^{\infty}t^{s-2-\ell}f^{(n-1)*}(t)\,dt\right)\,du\\
\nonumber &=&
   \frac{(s-1)}{\mu^{n*}_{s-1}}\int_{x}^{\infty}f(u)\left(\sum_{\ell=0}^{s-2}{{s-2}\choose{\ell}}(u-x)^{\ell}\mu^{(n-1)*}_{s-2-\ell}\right)\,du\\
\nonumber &=&
   \frac{(s-1)}{\mu^{n*}_{s-1}}\sum_{\ell=0}^{s-2}{{s-2}\choose{\ell}}\mu^{(n-1)*}_{s-2-\ell}\int_{x}^{\infty}(u-x)^\ell f(u)\,du\\
\nonumber &=&
  \frac{(s-1)}{\mu^{n*}_{s-1}}\sum_{\ell=0}^{s-2}{{s-2}\choose{\ell}}\mu^{(n-1)*}_{s-2-\ell}\,\frac{\mu_{\ell+1}}{\ell+1}f_{\ell+2}(x)\\
 &=&\label{int2}\frac{1}{\mu^{n*}_{s-1}}\sum_{\ell=1}^{s-1}{{s-1}\choose{\ell}}\mu^{(n-1)*}_{s-1-\ell}\mu_{\ell}f_{\ell+1}(x).
\end{eqnarray}
The conclusion (\ref{eq:conv}) now follows by combining (\ref{int}), (\ref{int1}) and (\ref{int2}).
\end{proof}
The result above provides a recursive formula for the high order iterated distributions of the convolution $f^{n*}(x)$. A simple application is given next, identifying explicitly the iterated distributions induced by integer shape parameter Gamma distributions.

\begin{example}
The $\Gamma(2,\lambda)$ is the 2-nd convolution power of the exponential distribution with hazard rate $\lambda$, 
whose density function is $f(x) =\lambda e^{-\lambda x}$, for $x\geq 0$. 
With the notation introduced above, the density function $g(x)$ of the $\Gamma(2,\lambda)$ distribution is represented as $g(x)=f^{2\ast}(x)$, hence the $s-$iterated distribution induced by the $\Gamma(2,\lambda)$ distribution has density $g_s(x)=f_s^{2\ast}(x)$, and we may use (\ref{eq:conv}) to obtain a recursive representation.

As the exponential is a fixed point for the iterative procedure introduced in Definition~\ref{def:s-iter}, we have $f_{s}^{1*}(x) = f_s(x)=f(x)=\lambda e^{-\lambda x}$, therefore $\mu_{s-1}^{1*}$ is the moment of order $s-1$ of an exponential random variable with hazard rate $\lambda$, that is, $\mu_{s-1}^{1*}=\frac{(s-1)!}{\lambda^{s-1}}$. Moreover, as the 2-nd convolution power is the $\Gamma(2,\lambda)$, we also know that $\mu_{s-1}^{2*}= \frac{\Gamma(s+1)}{\lambda^{s-1}}=\frac{s!}{\lambda^{s-1}}$. Furthermore,
$$
\int_{0}^{x}f(t)f^{1*}_s(x-t)\,dt =\int_{0}^{x}\lambda e^{-\lambda t}\lambda e^{-\lambda (x-t)}\,dt
  =\lambda^2\int_{0}^{x}e^{-\lambda x}\,dt=\lambda^2 xe^{-\lambda x}
$$
and
\begin{eqnarray*} \frac{1}{\mu^{n*}_{s-1}}\sum_{\ell=1}^{s-1}{{s-1}\choose{\ell}}\mu^{(n-1)*}_{s-\ell-1}\mu_{\ell}f_{\ell+1}(x)
 & = &
  \frac{\lambda^{s-1}}{s!}\sum_{\ell=1}^{s-1}{{s-1}\choose{\ell}}\frac{(s-\ell-1)!}{\lambda^{s-\ell-1}}\frac{\ell!}{\lambda^\ell}\lambda e^{-\lambda x}\\
 & = &\frac{\lambda (s-1)}{s}e^{-\lambda x}.
\end{eqnarray*}
Hence, the $s-$iterated distribution induced by the $\Gamma(2,\lambda)$ distribution has density
\begin{equation}
\label{pdf gamma}g_s(x)=f_s^{2*}(x) = \frac{1}{s}\lambda^2xe^{-\lambda x}+\frac{s-1}{s}f(x).
\end{equation}
\end{example}

This approach may be extended to general Gamma distributions with integer valued shape parameter. The argument goes along the same line, needing some extra effort on the manipulation of some combinatorial sums. We first state an auxiliary combinatorial lemma that is useful for obtaining the results that follow.

\begin{lemma}
\label{columns formula}
For every $m,k\geq1$, we have that
\[
\sum_{j=0}^{m}\binom{k+j}{k}=\binom{k+m+1}{m}.
\]
\end{lemma}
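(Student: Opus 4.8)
The plan is to prove the identity $\sum_{j=0}^{m}\binom{k+j}{k}=\binom{k+m+1}{m}$ by induction on $m$, using the Pascal recurrence $\binom{a}{b}+\binom{a}{b-1}=\binom{a+1}{b}$ as the engine. For the base case $m=0$, the left-hand side is $\binom{k}{k}=1$ and the right-hand side is $\binom{k+1}{0}=1$, so the claim holds for every $k\geq 1$.

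For the inductive step, assume the identity holds for some $m\geq 0$ and all $k\geq 1$. Then
\[
\sum_{j=0}^{m+1}\binom{k+j}{k}=\left(\sum_{j=0}^{m}\binom{k+j}{k}\right)+\binom{k+m+1}{k}=\binom{k+m+1}{m}+\binom{k+m+1}{k}.
\]
Now rewrite $\binom{k+m+1}{k}=\binom{k+m+1}{m+1}$ using the symmetry $\binom{a}{b}=\binom{a}{a-b}$, so that the two terms become $\binom{k+m+1}{m}+\binom{k+m+1}{m+1}$, which by Pascal's rule equals $\binom{k+m+2}{m+1}$. This is exactly the right-hand side of the claimed identity with $m$ replaced by $m+1$, completing the induction.

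Alternatively, one could give a one-line combinatorial ``hockey stick'' argument (count lattice paths, or equivalently count the ways to choose $k+1$ elements from $\{0,1,\dots,k+m+1\}$ by conditioning on the largest element), but the induction above is cleaner to write and entirely self-contained. I do not anticipate any real obstacle here: the only point requiring a moment's care is matching the upper-versus-lower index conventions when applying Pascal's rule, which is handled by the symmetry step that converts $\binom{k+m+1}{k}$ into $\binom{k+m+1}{m+1}$ so the two summands share the appropriate form.
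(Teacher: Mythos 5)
Your proof is correct and takes the same approach the paper indicates, namely induction on $m$ with Pascal's rule as the engine; the paper simply states "This follows easily by induction on $m$" without spelling out the details, and you have filled them in correctly (including the useful symmetry step $\binom{k+m+1}{k}=\binom{k+m+1}{m+1}$ needed to apply Pascal's rule in the stated form).
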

\begin{proof}
This follows easily by induction on $m$.
\end{proof}
\begin{proposition}
\label{prop:sGamma}
Let $X$ be a random variable with distribution $\Gamma(n,\lambda)$, and $f(x)=\lambda e^{-\lambda x}$ be the density of an exponential distribution with hazard rate $\lambda>0$. For every $s,n\geq 2$, the density function of the $s-$iterated distribution induced by $X$ is
\begin{equation}
\label{eq:convGamma}g_s(x) = f_s^{n*}(x) = \frac{n-1}{n+s-2}f\ast f_s^{(n-1)*}(x)+\frac{s-1}{n+s-2}f(x).
\end{equation}
\end{proposition}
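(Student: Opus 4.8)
The plan is to apply the recursive formula of Theorem~\ref{thm:conv} to the representation $X=S_n=X_1+\cdots+X_n$, where $X_1,\dots,X_n$ are i.i.d.\ exponential random variables with hazard rate $\lambda$, so that $S_n\sim\Gamma(n,\lambda)$ and the density of the $s$-iterated distribution induced by $X$ is $g_s=f_s^{n*}$. Everything then reduces to (i) identifying the two coefficients appearing in (\ref{eq:conv}) by computing the relevant moments, and (ii) collapsing the finite sum in (\ref{eq:conv}) using the fact that the exponential is a fixed point of the iteration.

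First I would record the moments. Since $S_n\sim\Gamma(n,\lambda)$ with integer $n$, we have $\mu_k^{n*}=\dE S_n^k=\frac{(n+k-1)!}{(n-1)!\,\lambda^k}$, and likewise $\mu_k^{(n-1)*}=\frac{(n+k-2)!}{(n-2)!\,\lambda^k}$, while $\mu_\ell=\dE X_1^\ell=\frac{\ell!}{\lambda^\ell}$. The coefficient of the convolution term in (\ref{eq:conv}) is then
$$
\frac{\mu_{s-1}^{(n-1)*}}{\mu_{s-1}^{n*}}=\frac{(n+s-3)!/(n-2)!}{(n+s-2)!/(n-1)!}=\frac{n-1}{n+s-2},
$$
which already matches the first term of (\ref{eq:convGamma}).

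Next, since each $X_i$ is exponential and the exponential distributions are fixed points of the iteration (as noted in the Introduction), $f_{\ell+1}=f$ for every $\ell\geq0$; hence the whole sum in (\ref{eq:conv}) equals $f(x)$ times $\frac{1}{\mu_{s-1}^{n*}}\sum_{\ell=1}^{s-1}\binom{s-1}{\ell}\mu_{s-\ell-1}^{(n-1)*}\mu_\ell$. Substituting the moment formulas, using $\binom{s-1}{\ell}\ell!=\frac{(s-1)!}{(s-1-\ell)!}$ and cancelling the powers of $\lambda$, this coefficient becomes $\frac{(n-1)(s-1)!}{(n+s-2)!}\sum_{\ell=1}^{s-1}\frac{(n+s-\ell-3)!}{(s-1-\ell)!}$. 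I would then reindex with $j=s-1-\ell$, which rewrites the sum as $(n-2)!\sum_{j=0}^{s-2}\binom{n-2+j}{n-2}$, and apply Lemma~\ref{columns formula} with $k=n-2$ and $m=s-2$ to get $(n-2)!\binom{n+s-3}{s-2}=\frac{(n+s-3)!}{(n-1)(s-2)!}$. Putting this back, the coefficient of $f(x)$ collapses to $\frac{(s-1)!\,(n+s-3)!}{(n+s-2)!\,(s-2)!}=\frac{s-1}{n+s-2}$, exactly the coefficient of $f$ in (\ref{eq:convGamma}); combining the two terms of (\ref{eq:conv}) then gives the claim.

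The only genuinely delicate step is the combinatorial simplification: keeping the factorials straight through the reindexing and invoking Lemma~\ref{columns formula} in the right form. I would also check the degenerate cases $n=2$ and $s=2$, where the hypotheses $m,k\geq1$ of Lemma~\ref{columns formula} fail; there the required identity reduces to $\sum_{j=0}^{s-2}1=s-1$ or to the single term $1$, so it still holds, and for $n=2$ the resulting formula is consistent with the explicit $\Gamma(2,\lambda)$ density obtained in (\ref{pdf gamma}).
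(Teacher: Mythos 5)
Your proof is correct and follows essentially the same route as the paper: specialize Theorem~\ref{thm:conv} to i.i.d.\ exponentials, compute the moment ratio $\mu_{s-1}^{(n-1)*}/\mu_{s-1}^{n*}$, exploit that $f_{\ell+1}=f$ for the exponential fixed point, and collapse the remaining combinatorial sum via Lemma~\ref{columns formula}. Your remark that the stated hypotheses $m,k\geq 1$ of Lemma~\ref{columns formula} fail at $n=2$ or $s=2$ but the identity still holds there is a small gap the paper glosses over, and your consistency check against \eqref{pdf gamma} is a welcome sanity check, though neither changes the substance of the argument.
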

\begin{proof}
As $g_s(x) = f_s^{n*}(x)$, we will use the recursive representation from Theorem~\ref{thm:conv}. Note that the moments of convolutions $\mu_{s-1}^{n*}$ are just the moments of order $s-1$ of the $\Gamma(n,\lambda)$ distribution. Hence
\begin{equation}
\label{ratiomus}
\mu_{s-1}^{n*} =\frac{1}{\lambda^{s-1}}\frac{(n+s-2)!}{(n-1)!}
\qquad\mbox{and}\qquad
\frac{\mu_{s-1}^{(n-1)*}}{\mu_{s-1}^{n*}}=\frac{n-1}{n+s-2}.
\end{equation}
Furthermore,
\begin{eqnarray}
\nonumber\lefteqn{\frac{1}{\mu^{n*}_{s-1}}\sum_{\ell=1}^{s-1}{{s-1}\choose {\ell}}\mu^{(n-1)*}_{s-\ell-1}\mu_{\ell}f_{\ell+1}(x) } \\
\nonumber& & =\frac{(n-1)!\lambda^{s-1}}{(n+s-2)!}\sum_{\ell=1}^{s-1}{{s-1}\choose {\ell}}\frac{\Gamma(n+s-\ell-2)}{\Gamma(n-1)\lambda^{s-\ell-1}}\frac{\ell!}{\lambda^\ell}\lambda e^{-\lambda x}\\
\nonumber& & =(n-1)\lambda e^{-\lambda x}\sum_{\ell=1}^{s-1}{{s-1}\choose {\ell}}\ell!\frac{(n+s-\ell-3)!}{(n+s-2)!}\\
\nonumber& & =(n-1)\frac{\lambda e^{-\lambda x}}{n+s-2}\sum_{\ell=1}^{s-1}\frac{(s-1)!}{(s-1-\ell)!}\frac{(n+s-(\ell+3))!}{(n+s-3)!}\\
\nonumber& & =\frac{(n-1)\lambda e^{-\lambda x}}{(n+s-2)}\frac{1}{{{n+s-3}\choose{n-2}}}\sum_{\ell=1}^{s-1}{{n+s-\ell-3}\choose{n-2}}\\
 & &\label{term2}=\frac{(s-1)}{(n+s-2)}f(x),
\end{eqnarray}
where the last equality follows by applying Lemma~\ref{columns formula}. The proof is concluded by rewriting (\ref{eq:conv}) taking into account (\ref{ratiomus}) and (\ref{term2}).
\end{proof}

The representation just proved in Proposition~\ref{prop:sGamma} allows for a first characterization of the behavior of the iterated distributions as the iteration step goes to $+\infty$.

\begin{corollary}
\label{cor:Gammalim1}
Let $X$ be a random variable with distribution $\Gamma(n,\lambda)$. Then, for each $x\geq 0$ fixed, $\lim_{s\rightarrow+\infty} g_{s}(x) = f(x)$,
where $f(x)$ is the density of an exponential distribution with hazard rate $\lambda$.
\end{corollary}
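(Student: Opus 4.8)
The plan is to read the limit directly off the recursive representation established in Proposition~\ref{prop:sGamma}. For $s\ge 2$ one has
\[
g_s(x)=\frac{n-1}{n+s-2}\,f\ast f_s^{(n-1)*}(x)+\frac{s-1}{n+s-2}\,f(x),
\]
and since $\frac{s-1}{n+s-2}\to 1$ and $\frac{n-1}{n+s-2}\to 0$ as $s\to+\infty$, the entire statement reduces to checking that the convolution term $f\ast f_s^{(n-1)*}(x)$ does not blow up with $s$, so that the vanishing coefficient in front of it kills its contribution.

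The key observation I would use is that such a bound comes essentially for free, and is even uniform. Indeed $f(t)=\lambda e^{-\lambda t}\le\lambda$ for every $t\ge 0$, while $f_s^{(n-1)*}$ is the density of the $s$-iterated distribution induced by a $\Gamma(n-1,\lambda)$ random variable, hence a genuine probability density with $\int_0^\infty f_s^{(n-1)*}(u)\,du=1$. Consequently
\[
0\le f\ast f_s^{(n-1)*}(x)=\int_0^x f(t)\,f_s^{(n-1)*}(x-t)\,dt\le\lambda\int_0^x f_s^{(n-1)*}(u)\,du\le\lambda
\]
for all $x\ge 0$ and all $s\ge 2$. Plugging this in, the first summand of $g_s(x)$ lies between $0$ and $(n-1)\lambda/(n+s-2)$, which tends to $0$, while the second summand tends to $f(x)$; adding the two gives $g_s(x)\to f(x)$. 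In fact, since $\frac{s-1}{n+s-2}-1=-\frac{n-1}{n+s-2}$, the same estimate yields $\sup_{x\ge 0}|g_s(x)-f(x)|\le 2(n-1)\lambda/(n+s-2)$, so the convergence is actually uniform on $[0,+\infty)$, a remark worth recording.

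I do not expect any serious obstacle. The only point that needs a line of justification is that $f_s^{(n-1)*}$ is genuinely normalized: this follows from Definition~\ref{def:s-iter} (applied with $S_{n-1}$ in place of $X$), by which the density of an iterated distribution equals the preceding iterated tail divided by its own integral, so it integrates to $1$ --- all legitimate because every Gamma distribution has finite moments of all orders. Note in particular that no induction on $n$ is needed, nor does one have to establish first that $f_s^{(n-1)*}$ itself converges to $f$; the mere normalization of $f_s^{(n-1)*}$ suffices.
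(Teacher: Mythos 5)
Your proof is correct and follows essentially the same route as the paper: both read the limit off the recursion in Proposition~\ref{prop:sGamma}, bound the convolution term $f\ast f_s^{(n-1)*}$ by $\lambda$ (since $f\le\lambda$ and $f_s^{(n-1)*}$ is a probability density), and then let $s\to+\infty$. The extra observation that the convergence is uniform on $[0,+\infty)$ is a nice, valid bonus the paper does not spell out.
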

\begin{proof}
The proof follows easily by observing that the convolution that appears in the first term on the right hand side of (\ref{eq:convGamma}) is bounded by $\lambda$. Allowing $s$ to tend to $+\infty$ we have the desired result.
\end{proof}

\noindent One could prefer to have a recursive characterization for the density of the $s-$iterated distribution involving only elementary operations, that is, avoiding convolutions on the recursive expression. This can be obtained by iteratively using (\ref{eq:conv}) to describe the $(n-1)$-th convolution power that appears in the right-hand side of (\ref{eq:conv}). We apply this technique for random variables that are Gamma distributed and this leads to a long algebraic manipulation proving the representation given below. Note that the iterated distributions induced by the Gamma family will be studied with a more explicit approach in the next section.

\begin{proposition}
\label{prop:GammaWOcon}
Let $X$ be a random variable with distribution $\Gamma(n,\lambda)$, with $\lambda>0$, and let $n\geq 2$. Then,
\begin{eqnarray*}
\mu^{n*}_{s-1} f_{s}^{n*}(x)-\mu^{(n-1)*}_{s-1} f_{s}^{(n-1)*}(x) &=&
\frac{(s-1)!}{\lambda^{s-1}}\left(\frac{\lambda^{n-1}x^{n-2}}{(n-2)!}e^{-\lambda x}\left(\frac{\lambda x}{n-1}-1\right)+1\right.\\
& &\quad\left. -{n+s-4\choose s-2}+e^{-\lambda x}\sum_{k=2}^{n-2}{s+k-2\choose k} \frac{\lambda^{n-k}x^{n-k-1}}{(n-k-1)!}\right).
\end{eqnarray*}
\end{proposition}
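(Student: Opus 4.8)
The plan is to unfold the convolution term in the recursive identity~(\ref{eq:convGamma}) down to the base case $n=1$, where the iterated exponential density is just $f$ itself, and then collect the resulting telescoping sum in closed form. Multiplying~(\ref{eq:convGamma}) through by $\mu^{n*}_{s-1}$ and using~(\ref{ratiomus}), the left-hand side $\mu^{n*}_{s-1} f_{s}^{n*}(x)-\mu^{(n-1)*}_{s-1} f_{s}^{(n-1)*}(x)$ equals $\mu^{(n-1)*}_{s-1}\,\bigl(f\ast f_{s}^{(n-1)*}(x) - f_{s}^{(n-1)*}(x)\bigr)$ plus the correction term $\frac{s-1}{n+s-2}\mu^{n*}_{s-1} f(x)$. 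So the real content is to get a usable expression for $f\ast f_{s}^{(n-1)*}(x)$, or equivalently for $f_{s}^{(n-1)*}$ itself. First I would establish, by induction on $n$ using~(\ref{eq:convGamma}), an explicit formula for $f_{s}^{n*}(x)$ as a linear combination of terms of the form $x^{j}e^{-\lambda x}$ (for $0\le j\le n-1$) together with a pure $e^{-\lambda x}$ contribution coming from the accumulated $\frac{s-1}{n+s-2}f(x)$ pieces; the coefficients will involve ratios of factorials and binomial coefficients of the shape $\binom{s+k-2}{k}$.

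The key steps, in order, would be: (i) write $f\ast(x^{j}e^{-\lambda x}) = \lambda\,\frac{x^{j+1}}{j+1}e^{-\lambda x}$, which is the elementary convolution computation that makes everything explicit; (ii) use (i) to turn the inductive hypothesis for $f_{s}^{(n-1)*}$ into an expression for $f\ast f_{s}^{(n-1)*}$, shifting the polynomial degree up by one; (iii) substitute into~(\ref{eq:convGamma}) and simplify the factorial/binomial prefactors, where Lemma~\ref{columns formula} (the ``hockey stick'' identity $\sum_{j=0}^{m}\binom{k+j}{k}=\binom{k+m+1}{m}$) will be needed to collapse the sum produced by the accumulated lower-order terms, exactly as in the proof of Proposition~\ref{prop:sGamma}; (iv) isolate the difference $\mu^{n*}_{s-1} f_{s}^{n*}(x)-\mu^{(n-1)*}_{s-1} f_{s}^{(n-1)*}(x)$ and match it against the claimed right-hand side, checking separately the three structural pieces: the leading term $\frac{\lambda^{n-1}x^{n-2}}{(n-2)!}e^{-\lambda x}\bigl(\frac{\lambda x}{n-1}-1\bigr)$ (coming from the top two polynomial degrees $j=n-1,n-2$), the $x$-free constants $1-\binom{n+s-4}{s-2}$ (coming from the $\ell$-sum and the normalization), and the intermediate sum $e^{-\lambda x}\sum_{k=2}^{n-2}\binom{s+k-2}{k}\frac{\lambda^{n-k}x^{n-k-1}}{(n-k-1)!}$.

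The main obstacle I expect is purely bookkeeping: tracking how the binomial coefficients $\binom{s+k-2}{k}$ propagate through the induction and verifying that the summation range and the shifts in the exponent of $x$ line up to give precisely $k=2$ to $n-2$ with $\frac{\lambda^{n-k}x^{n-k-1}}{(n-k-1)!}$. One has to be careful at the boundary values of $n$ (for instance $n=2$, where the intermediate sum is empty and the formula should reduce to the $\Gamma(2,\lambda)$ case computed earlier, cf.~(\ref{pdf gamma})), and at the top degree, where the combination $\frac{\lambda x}{n-1}-1$ is not a single monomial and must be recognized as the merge of the $j=n-1$ and $j=n-2$ contributions after the convolution shift. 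A clean way to organize the induction is to carry the statement not for the difference but for $\mu^{n*}_{s-1} f_{s}^{n*}(x)$ itself, prove that by induction, and only at the end subtract consecutive terms; this avoids re-deriving the recursion at each step and makes the role of Lemma~\ref{columns formula} transparent. Apart from this combinatorial care, every individual manipulation is elementary, so no genuinely new idea beyond~(\ref{eq:convGamma}), the base case, and the hockey-stick identity is required.
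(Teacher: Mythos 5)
Your approach is valid but takes a genuinely different route from the paper, differing both in which recursion is unfolded and in where the combinatorial work is done. You take the specialized recursion~(\ref{eq:convGamma}) of Proposition~\ref{prop:sGamma} as the starting point (that recursion already has the $\ell$-sum from Theorem~\ref{thm:conv} collapsed via Lemma~\ref{columns formula}) and propose to make the convolution completely explicit through the degree-shift identity $f\ast\bigl(x^{j}e^{-\lambda x}\bigr) = \lambda\,\tfrac{x^{j+1}}{j+1}e^{-\lambda x}$, so that an induction on $n$ yields a closed polynomial-times-exponential expression for $\mu^{n*}_{s-1}f_{s}^{n*}(x)$, and the proposition then follows by subtracting consecutive terms. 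The paper goes the other way: it multiplies the general recursion of Theorem~\ref{thm:conv} by $\mu^{n*}_{s-1}$ to obtain (\ref{eq:convn}), iterates that substitution to reach the fully unfolded representation (\ref{eq:convn1}) (a double sum, over unfold depth and over $\ell$), writes the analogue (\ref{eq:convn-1}) for $n-1$, subtracts early, and splits the difference into three pieces $A_1$, $A_2$, $A_3$, each evaluated using the fact that $f_\ell=f$ for the exponential, the gamma-density form of $f^{k*}$, and Lemma~\ref{columns formula}. Your route buys simplicity per unfold step (each step contributes a single correction $\tfrac{s-1}{k+s-2}\mu^{k*}_{s-1}f(x)$ rather than a fresh $\ell$-sum), and carrying the statement for $\mu^{n*}_{s-1}f_s^{n*}$ itself is cleaner and doubles as an alternative derivation of Theorem~\ref{thm:gamma}; the paper's route keeps the decomposition $A_1+A_2+A_3$ visible throughout, which is what gives the three-part structure of the stated right-hand side. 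Both hinge on the same ingredients, and both reduce to bookkeeping once the plan is fixed; your step (iv), where you match the induction output against the claimed formula, is precisely where constants must be verified, so do spot-check not only $n=2$ against (\ref{pdf gamma}) but also, say, $n=3$ at $s=2$, where $\mu^{n*}_{1}f_{2}^{n*}(x)=\overline{F}_{\Gamma(n,\lambda)}(x)$ and the difference reduces to a single gamma-tail term.
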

\begin{proof}
In order to obtain an alternative to the characterization in Proposition~\ref{prop:sGamma} without convolutions, let us pick up from the expression in Theorem~\ref{thm:conv} which can be written as
\begin{equation}
\label{eq:convn}
\mu^{n*}_{s-1} f_{s}^{n*}(x)=\mu^{(n-1)*}_{s-1}f\ast f_{s}^{(n-1)*}(x)+\sum_{\ell=1}^{s-1}{{s-1}\choose {\ell}}\mu^{(n-1)*}_{s-\ell-1}\mu_{\ell}f_{\ell+1}(x).
\end{equation}
We may rewrite this for the $(n-1)$-fold convolution, to get
$$
\mu^{(n-1)*}_{s-1} f_{s}^{(n-1)*}(x)=\mu^{(n-2)*}_{s-1}f\ast f_{s}^{(n-2)*}(x)+\sum_{\ell=1}^{s-1}{{s-1}\choose {\ell}}\mu^{(n-2)*}_{s-\ell-1}\mu_{\ell}f_{\ell+1}(x).
$$
The first term on the right in (\ref{eq:convn}) may be written as $f\ast (\mu^{(n-1)*}_{s-1}f_{s}^{(n-1)*})(x)$, so it can be replaced by the previous expression to find, after a rearrangement of the terms:
$$
\mu^{n*}_{s-1} f_{s}^{n*}(x)=\mu^{(n-2)*}_{s-1}f^{2\ast}\ast f_{s}^{(n-2)*}(x)+\sum_{\ell=1}^{s-1}{{s-1}\choose {\ell}}\mu_{\ell}\left(\mu^{(n-2)*}_{s-\ell-1}f\ast f_{\ell+1}(x)+\mu^{(n-1)*}_{s-\ell-1}f_{\ell+1}(x)\right).
$$
Again, we apply (\ref{eq:convn}) to the first term on the right-hand side above. So, iterating this substitution, we finally get the representation:
\begin{equation}
\label{eq:convn1}
\begin{array}{l}
\displaystyle\mu^{n*}_{s-1} f_{s}^{n*}(x)=\mu_{s-1}f^{(n-1)\ast}\ast f_{s}(x) \\
\displaystyle\quad+\sum_{\ell=1}^{s-1}{{s-1}\choose {\ell}}\mu_{\ell}\left(\mu_{s-\ell-1}f^{(n-2)*}(x)+\mu^{2*}_{s-\ell-1}f^{(n-3)*}(x)+\cdots+
\mu^{(n-2)\ast}_{s-\ell-1}f(x)+\mu^{(n-1)*}_{s-\ell-1}\right)\ast f_{\ell+1}(x).
\end{array}
\end{equation}
We may, of course, rewrite the representation above for $(n-1)$-fold convolution:
\begin{equation}
\label{eq:convn-1}
\begin{array}{l}
\displaystyle\mu^{(n-1)*}_{s-1} f_{s}^{(n-1)*}(x)=\mu_{s-1}f^{(n-2)\ast}\ast f_{s}(x) \\
\displaystyle\quad+\sum_{\ell=1}^{s-1}{{s-1}\choose {\ell}}\mu_{\ell}\left(\mu_{s-\ell-1}f^{(n-3)*}(x)+\mu^{2*}_{s-\ell-1}f^{(n-4)*}(x)+\cdots+
\mu^{(n-3)\ast}_{s-\ell-1}f(x)+\mu^{(n-2)*}_{s-\ell-1}\right)\ast f_{\ell+1}(x),
\end{array}
\end{equation}
from which follows that
\begin{eqnarray*}
	\lefteqn{\mu^{n*}_{s-1} f_{s}^{n*}(x)-\mu^{(n-1)*}_{s-1} f_{s}^{(n-1)*}(x)} \\
	& & \quad = \mu_{s-1}\left(f^{(n-1)\ast}-f^{(n-2)\ast}\right)\ast f_s(x) \\
	& &\qquad
	+\sum_{\ell=1}^{s-1}{{s-1}\choose {\ell}}\mu_\ell\left(\mu_{s-\ell-1}f^{(n-2)\ast}-\mu^{(n-2)\ast}_{s-\ell-1}\right)\ast f_{\ell+1}(x) \\
	& &\qquad
	+\sum_{\ell=1}^{s-1}{{s-1}\choose {\ell}}\mu_\ell\left(\sum_{k=2}^{n-2}\left(\mu^{k\ast}_{s-\ell-1}-\mu^{(k-1)\ast}_{s-\ell-1} \right)f^{(n-k-1)\ast} \right)\ast f_{\ell+1}(x).
\end{eqnarray*}
\noindent
Since $f$ is the exponential density, the iterated density $f_\ell=f$, hence, we may rewrite:
\begin{eqnarray*}
\lefteqn{\mu^{n*}_{s-1} f_{s}^{n*}(x)-\mu^{(n-1)*}_{s-1} f_{s}^{(n-1)*}(x)} \\
	& & \quad = \mu_{s-1}\left(f^{n\ast}-f^{(n-1)\ast}\right) \\
	& &\qquad
	+\sum_{\ell=1}^{s-1}{{s-1}\choose {\ell}}\mu_\ell\left(\mu_{s-\ell-1}f^{(n-1)\ast}-\mu^{(n-2)\ast}_{s-\ell-1}f\right)\\
	& &\qquad
	+\sum_{\ell=1}^{s-1}{{s-1}\choose {\ell}}\mu_\ell\left(\sum_{k=2}^{n-2}\left(\mu^{k\ast}_{s-\ell-1}-\mu^{(k-1)\ast}_{s-\ell-1} \right)f^{(n-k)\ast} \right)\\
	& & = A_1+A_2+A_3.
\end{eqnarray*}
We may now compute each of these three terms. Recalling that the convolution of exponentials is a gamma density, and the expressions for the moments, it follows easily that
$$A_1=\frac{(s-1)!}{\lambda^{s-1}}\frac{\lambda^{n-1}x^{n-2}}{(n-2)!}e^{-\lambda x}\left(\frac{\lambda x}{n-1}-1\right).
$$
Recall that
$$
\begin{array}{rcl}
A_2 & = &
\displaystyle\sum_{\ell=1}^{s-1}{{s-1}\choose {\ell}}\mu_\ell\left(\mu_{s-\ell-1}f^{(n-1)\ast}-\mu^{(n-2)\ast}_{s-\ell-1}f\right)\\
& = & \displaystyle\sum_{\ell=1}^{s-1}{{s-1}\choose {\ell}}\frac{\ell!}{\lambda^{s-1}}\left((s-\ell-1)! f^{(n-1)\ast}-\frac{(n+s-\ell-4)!}{(n-3)!}f\right).
\end{array}
$$
So, we need to compute two summations:
$$
\sum_{\ell=1}^{s-1}{{s-1}\choose {\ell}}\frac{\ell!}{\lambda^{s-1}}(s-\ell-1)!=\frac{(s-1)!}{\lambda^{s-1}}.
$$
and, using Lemma~\ref{columns formula},
$$
\sum_{\ell=1}^{s-1}{{s-1}\choose {\ell}}\frac{\ell!}{\lambda^{s-1}}\frac{(n+s-\ell-4)!}{(n-3)!}
=\frac{(s-1)!}{\lambda^{s-1}}\sum_{\ell=1}^{s-1}{n+s-\ell-4\choose n-3}=\frac{(s-1)!}{\lambda^{s-1}}{n+s-4\choose s-2}.
$$
\noindent
We now compute
$$
A_3=\sum_{\ell=1}^{s-1}{{s-1}\choose {\ell}}\mu_\ell\left(\sum_{k=2}^{n-2}\left(\mu^{k\ast}_{s-\ell-1}-\mu^{(k-1)\ast}_{s-\ell-1} \right)f^{(n-k)\ast} \right).
$$
Note first that $\mu^{k\ast}_0-\mu^{(k-1)\ast}_0=0$, so the summation ranges only from $\ell=1$ up to $s-2$. Further, for $\ell>0$,
$$
\mu^{k\ast}_\ell-\mu^{(k-1)\ast}_\ell=\frac{\ell}{\lambda^\ell}\frac{(k+\ell-2)!}{(k-1)!}.
$$
Replacing these expressions and inverting the summations, we find
$$
\begin{array}{rcl}
A_3 & = & \displaystyle \sum_{\ell=1}^{s-2}\frac{(s-1)!}{\ell!(s-\ell-1)!}\frac{\ell!}{\lambda^\ell}
\left(\sum_{k=2}^{n-2}\left(\mu^{k\ast}_{s-\ell-1}-\mu^{(k-1)\ast}_{s-\ell-1} \right)f^{(n-k)\ast} \right)\\
& = &\displaystyle
\sum_{k=2}^{n-2}\sum_{\ell=1}^{s-2}\frac{(s-1)!}{(s-\ell-2)!}\frac{1}{\lambda^{s-1}} \frac{(s+k-\ell-3)!}{(k-1)!}f^{(n-k)\ast} \\
& = & \displaystyle
\frac{(s-1)!}{\lambda^{s-1}}\sum_{k=2}^{n-2}\sum_{\ell=1}^{s-2} {s+k-\ell-3\choose k-1} f^{(n-k)\ast} \\
& = & \displaystyle
\frac{(s-1)!}{\lambda^{s-1}}\sum_{k=2}^{n-2}{s+k-2\choose k}, 
\end{array}
$$
using Lemma~\ref{columns formula} for the final equality.
\end{proof}


\section{Iterated distributions induced by Gamma distributions}
Given a random variable $X$, the iterated distributions induced by $\frac{X}{\theta}$, where $\theta>0$, are easily related to the iterated distributions induced by $X$. Indeed, it follows immediately from (\ref{eq:simpleTs}) that, for every $s\geq 1$ and $\theta>0$, $\Ts{\frac{X}{\theta}}{s}(x)=\TXs{s}(\frac{x}{\theta})$. Therefore, in order to characterize the iterated distributions induced by a Gamma distributed random variable, it is enough to treat the case where $X$ has distribution $\Gamma(\alpha,1)$. We first derive an explicit expression for the $s-$iterated distribution when the shape parameter $\alpha$ is an integer.

\begin{theorem}
\label{thm:gamma}
Assume $X$ is $\Gamma(\alpha,1)$ distributed with integer shape parameter $\alpha\geq 2$. For every $s\geq 2$, the $s-$iterated distribution induced by $X$ has tail given by
\begin{equation}
\label{eq:tailGamma-s}
\TXs{s}(x) = e^{-x }+ \frac{e^{-x}}{\binom{\alpha+s-2}{\alpha-1}}\sum_{\ell=1}^{\alpha-1} \binom{s+\alpha-\ell-2}{\alpha-\ell-1}\frac{x^\ell}{\ell !}.
\end{equation}
\end{theorem}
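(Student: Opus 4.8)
The plan is to bypass the convolution machinery of Section~3 altogether and argue directly from the closed-form representation of Theorem~\ref{Simple T_s}. Since $X\sim\Gamma(\alpha,1)$ has finite moments of every order, (\ref{eq:simpleTs}) applies for all $s\geq 2$; inserting $f_X(t)=t^{\alpha-1}e^{-t}/(\alpha-1)!$ and $\dE X^{s-1}=(\alpha+s-2)!/(\alpha-1)!$ gives
\[
\TXs{s}(x)=\frac{1}{(\alpha+s-2)!}\int_x^\infty t^{\alpha-1}e^{-t}(t-x)^{s-1}\,dt .
\]
The first step is the change of variable $u=t-x$, which pulls out the factor $e^{-x}$ and turns this into $\dfrac{e^{-x}}{(\alpha+s-2)!}\displaystyle\int_0^\infty (u+x)^{\alpha-1}u^{s-1}e^{-u}\,du$.

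Next I would expand $(u+x)^{\alpha-1}$ by the binomial theorem and integrate term by term, each summand reducing to a plain Gamma integral $\int_0^\infty u^{k+s-1}e^{-u}\,du=(k+s-1)!$. After reindexing by $\ell=\alpha-1-k$ this produces
\[
\TXs{s}(x)=e^{-x}\sum_{\ell=0}^{\alpha-1}\frac{(\alpha-1)!}{(\alpha-1-\ell)!}\,\frac{(\alpha+s-\ell-2)!}{(\alpha+s-2)!}\,\frac{x^\ell}{\ell!}.
\]
Finally I would peel off the $\ell=0$ term, which equals $1$, and rewrite the remaining factorial ratios as the binomial coefficients appearing in (\ref{eq:tailGamma-s}), using the one-line identity $\binom{\alpha+s-2}{\alpha-1}^{-1}\binom{s+\alpha-\ell-2}{\alpha-\ell-1}=\dfrac{(\alpha-1)!}{(\alpha-1-\ell)!}\,\dfrac{(\alpha+s-\ell-2)!}{(\alpha+s-2)!}$. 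This yields exactly the claimed formula.

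There is no genuinely hard step in this route; the only place calling for care is the bookkeeping in the reindexing and the verification of the factorial identity, and as a sanity check one should confirm that for $\alpha=2$ the formula collapses to $\TXs{s}(x)=e^{-x}(1+x/s)$, consistent with (\ref{pdf gamma}). An alternative proof, more in the spirit of Section~3, is to sum the telescoping identity of Proposition~\ref{prop:GammaWOcon} over $n=2,\dots,\alpha$ (the base term being the exponential density $(s-1)!\,e^{-x}$), divide by $\mu^{\alpha*}_{s-1}$, and integrate from $x$ to $\infty$; this works as well but involves substantially more algebra and is less transparent, so I would relegate it to a remark.
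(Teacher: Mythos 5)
Your proposal is correct and follows essentially the same route as the paper: the paper likewise invokes Theorem~\ref{Simple T_s}, substitutes $u=t-x$ to factor out $e^{-x}$, expands $(u+x)^{\alpha-1}$ by the binomial theorem, integrates term by term against $u^{s-1}e^{-u}$, and finally repackages the factorial ratios as binomial coefficients. The only cosmetic difference is that the paper first computes the stop-loss transform $\dE(X-x)^s_+$ and only normalizes by $\dE X^{s-1}$ at the very end, whereas you carry the normalization along from the start.
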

\begin{proof}
We start by calculating the stop-loss transform of order $s$ for the random variable $X$.
\begin{eqnarray*}
\dE(X-x)^s_+&=&\frac{1}{\Gamma(\alpha)}\int_{x}^{\infty}(t-x)^st^{\alpha-1}e^{-t}dt\\
 &=&\frac{e^{-x}}{\Gamma(\alpha)}\int_{0}^{\infty}u^s(u+x)^{\alpha-1}e^{-u}du\\
 &=&e^{-x}\frac{(s+\alpha-1)!}{(\alpha-1)!}+e^{-x}\sum_{k=1}^{\alpha-1}\frac{(s+\alpha-1-k)!}{(\alpha-1-k)!}\frac{x^k}{k!},
\end{eqnarray*}
where the last equality follows by applying the binomial expansion. The result follows by writing
\[
\TXs{s}(x) = \frac{1}{\dE X^{s-1}}\dE(X-x)^{s-1}_+=\frac{(\alpha-1)!}{(\alpha+s-2)!}\dE(X-x)^{s-1}_{+}.
\]
\end{proof}
Note that the representation (\ref{eq:tailGamma-s}) can also be derived using (\ref{eq:convGamma}) recursively. However, the argument used above makes the derivation much simpler.

The formula obtained in (\ref{eq:tailGamma-s}) allows for an easy and direct verification that it is compatible with the result of Corollary~\ref{cor:Gammalim1}, that is, that $\TXs{s}$ converges pointwise to the survival function of the exponential distribution as $s\longrightarrow+\infty$. However, as in Corollary~\ref{cor:Gammalim1}, this convergence is only established for the case where the shape parameter is an integer.

The remaining part of this section extends the characterization of the pointwise limit behavior of the Gamma distribution, to non-integer shape parameter.
To this end, we need a way to control the iterated tails with respect to the variation of the shape parameter. For this purpose, we recall a definition introduced by Fagiuoli and Pellerey~\cite{FP93}.
\begin{definition}
Let $X$ and $Y$ be nonnegative valued random variables and $s\geq 1$. The random variable $Y$ is said to be larger than $X$ in $\sFR{s}$ ordering ($X\leq_{\sFR{s}}Y$) if $\frac{\TYs{s}}{\TXs{s}}$ is nondecreasing in $x\geq0$.
\end{definition}
The definition given above introduces an order relation between nonnegative random variables that, according to the result that follows, is hereditary with respect to the iteration parameter. Note that although the result is known, we were not able to find its proof in any published paper. Therefore, the proof is provided here for the sake of completeness.
\begin{lemma}[Theorem~3.4 in Fagiuoli and Pellerey~\cite{FP93}]
\label{SFR}
Assume $X$ and $Y$ are nonnegative valued random variables and $s\geq 1$. If $X\leq_{\sFR{s}}Y$, then $X\leq_{\sFR{(s+1)}}Y$.
\end{lemma}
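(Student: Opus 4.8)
The plan is to reduce the statement to an elementary monotonicity property of tail integrals. By Definition~\ref{def:s-iter}, for $s\ge 1$ one has $\TXs{s+1}(x)=\frac{1}{\muXs{s}}\int_x^\infty\TXs{s}(t)\,dt$ and $\TYs{s+1}(x)=\frac{1}{\muYs{s}}\int_x^\infty\TYs{s}(t)\,dt$, with the normalizing constants $\muXs{s},\muYs{s}$ finite and positive (part of the standing assumption that the iterated distributions are well defined). Hence
$$
\frac{\TYs{s+1}(x)}{\TXs{s+1}(x)}=\frac{\muXs{s}}{\muYs{s}}\cdot\frac{\int_x^\infty\TYs{s}(t)\,dt}{\int_x^\infty\TXs{s}(t)\,dt},
$$
and, the factor $\muXs{s}/\muYs{s}$ being a positive constant, $X\le_{\sFR{(s+1)}}Y$ is equivalent to the map $x\mapsto\int_x^\infty\TYs{s}(t)\,dt\big/\int_x^\infty\TXs{s}(t)\,dt$ being nondecreasing.

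The key step is therefore the following general fact: if $f,g$ are nonnegative, nonincreasing and integrable on $[0,\infty)$, and $g/f$ is nondecreasing on $\{f>0\}$, then $F(x)=\int_x^\infty f$ and $G(x)=\int_x^\infty g$ satisfy $G/F$ nondecreasing. To prove it I would fix $0\le x\le y$, write $G(x)=\int_x^y g+G(y)$ and $F(x)=\int_x^y f+F(y)$, and compute
$$
G(y)F(x)-G(x)F(y)=G(y)\int_x^y f(u)\,du-F(y)\int_x^y g(u)\,du=\int_x^y\!\!\int_y^\infty\bigl(g(v)f(u)-g(u)f(v)\bigr)\,dv\,du.
$$
For $u\le y\le v$, monotonicity of $g/f$ gives the pointwise inequality $g(u)f(v)\le g(v)f(u)$ (using that $\{f>0\}$ is an interval, so the bad combination $f(u)=0<f(v)$ never occurs), hence the double integral is nonnegative and $G(x)F(y)\le G(y)F(x)$, i.e. $G/F$ is nondecreasing. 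Equivalently, if one prefers to differentiate, $(G/F)'=(fG-gF)/F^2$ and $fG-gF=\int_x^\infty\bigl(f(x)g(t)-g(x)f(t)\bigr)\,dt\ge 0$ by the same pointwise inequality.

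Applying this with $f=\TXs{s}$ and $g=\TYs{s}$ — which are nonnegative, nonincreasing tails, and whose ratio is nondecreasing precisely because $X\le_{\sFR{s}}Y$ — yields that the tail-integral ratio above is nondecreasing, whence $X\le_{\sFR{(s+1)}}Y$. The only point requiring a little care is the bookkeeping about where the tails vanish: since $X\le_{\sFR{s}}Y$ and the iterated tails are nonincreasing survival functions, one first notes that $\{\TYs{s}>0\}\supseteq\{\TXs{s}>0\}$, so that on $\{\TXs{s}>0\}$ the ratio is genuinely defined and nondecreasing and the pointwise inequality used in the double integral holds everywhere. Apart from this, the argument consists only of an interchange of the order of integration and a sign check, so I do not anticipate a serious obstacle.
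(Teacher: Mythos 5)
Your proof is correct, but it takes a genuinely different route from the paper's. The paper argues by counting sign changes: it fixes $a>0$, sets $H(x)=\TYs{s+1}(x)-a\TXs{s+1}(x)$, observes that $H'(x)$ has the same sign pattern as $-(\TYs{s}(x)-b\TXs{s}(x))$ for a constant $b$ absorbing the normalizations $\muXs{s},\muYs{s}$, invokes the hypothesis to constrain that pattern to at most ``$+,-$'', and then uses $\lim_{x\to\infty}H(x)=0$ to conclude that $H$ itself has sign pattern at most ``$-,+$''; since $a>0$ is arbitrary, this is precisely the nondecreasingness of $\TYs{s+1}/\TXs{s+1}$. You instead prove directly that a nondecreasing ratio $g/f$ of nonnegative, nonincreasing, integrable functions is inherited by the tail integrals $G(x)=\int_x^\infty g$, $F(x)=\int_x^\infty f$, via the Chebyshev-type identity
$G(y)F(x)-G(x)F(y)=\int_x^y\!\int_y^\infty\bigl(g(v)f(u)-g(u)f(v)\bigr)\,dv\,du\ge 0$
(or equivalently via the sign of $(G/F)'$), and then rescale by the constant $\muXs{s}/\muYs{s}$. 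Both are standard techniques; yours is more self-contained and exhibits the monotonicity mechanism explicitly (with both a two-point and a derivative version), whereas the paper's version is shorter once one is fluent in the variation-diminishing vocabulary but leaves the key implication as ``easily verified.'' Your attention to supports is also correct and not superfluous: since $\TYs{s}(0)=1>0$ and $\TYs{s}/\TXs{s}$ is nondecreasing, one does have $\{\TYs{s}>0\}\supseteq\{\TXs{s}>0\}$, which is exactly what is needed for the pointwise inequality $g(u)f(v)\le g(v)f(u)$ to hold for all $u\le v$.
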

\begin{proof}
Let $a$ be a positive real number. In order to prove that $\frac{\TYs{s+1}}{\TXs{s+1}}$ is increasing, it is enough to prove that $H(x)=\TYs{s+1}(x)-a\TXs{s+1}(x)$ changes sign at most once in the order ``$-,+$'', when $x$ traverses from 0 to $+\infty$. It is easily verified that $H^{\prime}(x)$ has the same sign variation as $V(x)=-(\TYs{s}(x)-b\TXs{s}(x))$,
where $b=a\frac{\mathbb{E}X^{s-1}\mathbb{E}Y^{s}}{\mathbb{E}X^{s}\mathbb{E}Y^{s-1}}$. Now, using the fact that $\frac{\TYs{s}}{\TXs{s}}$ is nondecreasing, the sign variation of $H^{\prime}$ is either ``$-$'', ``$+$'' or ``$+,-$''. Taking into account that $\lim_{x\rightarrow +\infty} H(x)=0$, we get that the sign variation of $H(x)$ is at most ``$-,+$''.
\end{proof}
Next, we establish the \sFR{s} order relationship within the Gamma family of distributions.
\begin{theorem}
\label{thm:sFR}
Let $X$ be a random variable with distribution $\Gamma(\alpha,1)$ and $Y$ be a random variable with distribution $\Gamma(\beta,1)$ where $\alpha,\beta >0$. Then $X\leq_{\sFR{s}}Y$ if and only if $\alpha<\beta$.
\end{theorem}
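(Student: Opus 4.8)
The plan is to combine the heredity of the $\sFR{s}$ order proved in Lemma~\ref{SFR} with a direct computation at the first level $s=1$, where $\sFR{1}$ coincides with the hazard rate order. First I would establish the ``if'' part: assuming $\alpha<\beta$, show $X\leq_{\sFR{1}}Y$, i.e. $r_\alpha(x)\geq r_\beta(x)$ for every $x>0$, where $r_\gamma=f_\gamma/\overline{F}_\gamma$ denotes the hazard rate of $\Gamma(\gamma,1)$; then invoke Lemma~\ref{SFR} repeatedly to obtain $X\leq_{\sFR{s}}Y$ for all $s\geq1$. The crucial step is the representation
$$
\frac{1}{r_\gamma(x)}=\frac{\overline{F}_\gamma(x)}{f_\gamma(x)}=\frac{\int_x^\infty t^{\gamma-1}e^{-t}\,dt}{x^{\gamma-1}e^{-x}}=\int_0^\infty\Bigl(1+\tfrac{u}{x}\Bigr)^{\gamma-1}e^{-u}\,du ,
$$
obtained from the substitution $t=x+u$. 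For fixed $x>0$ and $u>0$ one has $1+u/x>1$, so the integrand is strictly increasing in $\gamma$; hence $1/r_\gamma(x)$ increases and $r_\gamma(x)$ decreases in $\gamma$, which gives $r_\alpha>r_\beta$ pointwise whenever $\alpha<\beta$. This argument makes no use of the explicit formula of Theorem~\ref{thm:gamma} and is therefore valid for arbitrary real $\alpha,\beta>0$.

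For the converse I would argue directly at level $s$, since Lemma~\ref{SFR} only propagates the order upward and gives no leverage going downward in the iteration index. Suppose $X\leq_{\sFR{s}}Y$, so $\TYs{s}/\TXs{s}$ is nondecreasing on $[0,\infty)$. Because $\TXs{s}(0)=\TYs{s}(0)=1$, this ratio stays $\geq1$, whence $\TYs{s}(x)\geq\TXs{s}(x)$ for all $x\geq0$; integrating over $[0,\infty)$ gives $\muYs{s}\geq\muXs{s}$. Combining (\ref{eq:moms}) with the Gamma moment identity $\dE X^s/\dE X^{s-1}=\Gamma(\alpha+s)/\Gamma(\alpha+s-1)=\alpha+s-1$ (valid for every real $\alpha>0$) this becomes $\tfrac{\beta+s-1}{s}\geq\tfrac{\alpha+s-1}{s}$, i.e. $\alpha\leq\beta$. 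The remaining equality case $\alpha=\beta$ is precisely the situation in which $X$ and $Y$ have the same distribution and $\TYs{s}/\TXs{s}\equiv1$; discarding this trivial possibility yields the strict inequality.

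I expect the converse to be the delicate part: the heredity lemma is of no help for it, so a fresh idea is needed, and the one above works only because of the normalization $\TXs{s}(0)=1$, which converts the monotonicity hypothesis into a pointwise ordering of the $s$-iterated tails and then into a single comparison of first moments through (\ref{eq:moms}). A secondary technical nuisance is that the shape parameters are not assumed to be integers, so the closed form of Theorem~\ref{thm:gamma} is unavailable; this is sidestepped entirely by phrasing the ``if'' part via the hazard-rate integral representation displayed above, which holds for all $\gamma>0$.
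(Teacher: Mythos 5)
Your proposal is correct and, in two respects, goes a genuinely different route from the paper. For the ``if'' direction the paper also reduces matters to $s=1$ via Lemma~\ref{SFR}, but then argues by sign--variation: it considers $H(x)=\TYs{1}(x)-a\TXs{1}(x)$, differentiates, reads off that $H'$ has sign pattern ``$+,-$'' when $\beta>\alpha$, and concludes the pattern of $H$ is at most ``$-,+$''. You instead write
$$
\frac{1}{r_\gamma(x)}=\int_0^\infty\Bigl(1+\tfrac{u}{x}\Bigr)^{\gamma-1}e^{-u}\,du
$$
and observe the integrand is increasing in $\gamma$; this is a cleaner, more transparent monotonicity proof of $r_\alpha\geq r_\beta$, and it makes explicit what the paper leaves implicit, namely that $\sFR{1}$ is the hazard--rate order. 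For the ``only if'' direction the paper in fact proves nothing at all --- its proof establishes only $\alpha<\beta\Rightarrow X\leq_{\sFR{s}}Y$. Your moment argument ($\TXs{s}(0)=\TYs{s}(0)=1$, the nondecreasing ratio then forces $\TYs{s}\geq\TXs{s}$ pointwise, integrate and use $\muXs{s}=(\alpha+s-1)/s$) is a genuine addition.

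One small caveat, inherited from the theorem statement itself rather than your argument: the moment comparison yields $\alpha\leq\beta$, not $\alpha<\beta$. When $\alpha=\beta$ the ratio $\TYs{s}/\TXs{s}\equiv 1$ is (weakly) nondecreasing, so $X\leq_{\sFR{s}}Y$ holds trivially, and the ``only if'' claim as literally worded fails. Your suggestion to ``discard this trivial possibility'' is the right instinct --- the honest statement is an equivalence with $\alpha\leq\beta$, or one must read $\leq_{\sFR{s}}$ as a strict order --- but it should be flagged as a correction to the theorem rather than folded silently into the proof. To make the exclusion of $\alpha>\beta$ airtight you can also argue: if $\alpha>\beta$ then by the ``if'' direction $\TXs{s}/\TYs{s}$ is nondecreasing, so both it and its reciprocal nondecreasing forces the ratio constant, hence the two Gamma tails coincide, contradicting $\alpha\neq\beta$.
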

\begin{proof}
Taking into account Lemma~\ref{SFR}, it is enough to prove that $X\leq_{\sFR{1}}Y$, that is, that $\frac{\TYs{1}}{\TXs{1}}$ is nondecreasing. For this purpose, it is enough to prove that, for every $a>0$, $H(x)=\TYs{1}(x)-a\TXs{1}(x)$ changes sign at most once when $x$ goes from 0 to $+\infty$, and that if the change occurs it is in the order ``$-,+$''. Differentiating, we find that
$$
H^{\prime}(x)=a\frac{f_{X}(x)}{\dE X}-\frac{f_{Y}(x)}{\dE Y}
     =x^{\alpha-1}e^{-x}\left(\frac{a}{\Gamma(\alpha)}-\frac{x^{\beta-\alpha}}{\Gamma(\beta)}\right).
$$
As $\beta>\alpha$, the sign variation of $H^{\prime}(x)$ is ``$+,-$'', so the sign variation of $H(x)$ is either ``$+$'' or ``$-,+$'' which means that $H(x)$ is nondecreasing, hence $X\leq_{\sFR{1}}Y$.
\end{proof}
Before addressing the proof of the limit behavior of the iterated distributions, we quote here, with a suitable rephrasing, a result needed in course of the proof. Recall that for a random variable $X$, the failure rate of its distribution function $F_X$ at $x$ is defined as $f_X(x)/\overline{F}_X(x)$.
\begin{lemma}[Theorem~12 in Arab and Oliveira~\cite{AO18}]
\label{lem:thm12}
Let $X$ be a random variable with distribution $\Gamma(\alpha,\theta)$, where $\theta>0$, and $s\geq 1$ an integer. If $\alpha>1$ (resp., $\alpha <1$), then the failure rate of the $s-$iterated distribution induced by $X$ is increasing (resp., decreasing).
\end{lemma}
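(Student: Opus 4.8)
The plan is to reduce the assertion to a log-concavity (resp.\ log-convexity) property of the iterated tail $\TXs{s}$ and then to propagate that property through the iteration. Since the failure rate attached to a tail function $\overline{G}$ equals $-(\ln\overline{G})'$, it is nondecreasing exactly when $\overline{G}$ is log-concave and nonincreasing exactly when $\overline{G}$ is log-convex; and by (\ref{eq:s-iter}), $\frac{d}{dx}\TXs{s}(x)=-\frac{1}{\muXs{s-1}}\TXs{s-1}(x)$, so the failure rate of the $s$-iterated distribution is precisely $-(\ln\TXs{s})'$. Hence it suffices to prove that, for $X$ with distribution $\Gamma(\alpha,\theta)$, the tail $\TXs{s}$ is log-concave when $\alpha>1$ and log-convex when $\alpha<1$. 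The scaling identity $\Ts{\frac{X}{\theta}}{s}(x)=\TXs{s}(\frac{x}{\theta})$ noted at the beginning of this section shows that $\theta$ is immaterial here, so I take $\theta=1$.

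The base case is $s=0$: by Definition~\ref{def:s-iter}, $\TXs{0}=f_X$, and $\ln f_X(x)=\mathrm{const}+(\alpha-1)\ln x-x$ has second derivative $-(\alpha-1)/x^2$, so $f_X$ is strictly log-concave on $(0,\infty)$ when $\alpha>1$ and strictly log-convex (and strictly decreasing) there when $\alpha<1$. The inductive step relies on the following preservation statement: if $h>0$ is log-concave on $(0,\infty)$, then $G(x):=\int_x^\infty h(t)\,dt$ is log-concave; and if $h>0$ is log-convex and strictly decreasing on $(0,\infty)$ with $\int_0^\infty h<\infty$, then $G$ is log-convex. Since $G'=-h$ and $G''=-h'$, log-concavity of $G$ is the inequality $-h'(x)G(x)\le h(x)^2$ and log-convexity is the reverse inequality. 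For the log-concave case I would use the supporting-line bound $h(t)\le h(x)\exp(\frac{h'(x)}{h(x)}(t-x))$, valid for all $t$ by concavity of $\ln h$: if $h'(x)\ge0$ the claim is trivial, and if $h'(x)<0$ one sets $c=-h'(x)/h(x)>0$ and integrates over $t\in[x,\infty)$ to get $G(x)\le h(x)/c$, i.e.\ $-h'(x)G(x)=c\,h(x)G(x)\le h(x)^2$. The log-convex case is the mirror image: convexity of $\ln h$ gives $h(t)\ge h(x)\exp(\frac{h'(x)}{h(x)}(t-x))$; since $h$ is decreasing, $c:=-h'(x)/h(x)>0$, and integrating yields $G(x)\ge h(x)/c$ (finite by hypothesis), whence $-h'(x)G(x)\ge h(x)^2$.

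It then remains to run the induction. By (\ref{eq:s-iter}), $\TXs{k}(x)=\frac{1}{\muXs{k-1}}\int_x^\infty\TXs{k-1}(t)\,dt$ for $k\ge1$, with derivative $-\frac{1}{\muXs{k-1}}\TXs{k-1}(x)<0$ on $(0,\infty)$, so every iterated tail of index at least $1$ is strictly decreasing; the integrals involved are all finite because the $\Gamma(\alpha,1)$ law has finite moments of every order. Multiplying by the positive constant $\muXs{k-1}^{-1}$ changes neither log-concavity nor log-convexity, so applying the preservation statement with $h=\TXs{k-1}$ successively for $k=1,\dots,s$ — the strict monotonicity required by the log-convex branch being supplied by the base case (for $\alpha<1$ the density $f_X$ is decreasing) and by the fact that every iterated tail of index at least $1$ is decreasing — shows that $\TXs{s}$ is log-concave when $\alpha>1$ and log-convex when $\alpha<1$, which finishes the argument.

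The step I expect to be the main obstacle is the preservation statement, especially its log-convex half: one must be sure that the $h$ in play is genuinely decreasing, so that the comparison rate $c$ is strictly positive and the exponential majorant is integrable, and one must invoke the standing finiteness hypotheses to guarantee $G(x)<\infty$ before manipulating $\ln G$. The log-concave half is, modulo the elementary computation above, the classical fact that convolving a log-concave function with the indicator of a half-line preserves log-concavity, and could be cited instead; I would nonetheless keep the supporting-line computation so as to make the proof self-contained.
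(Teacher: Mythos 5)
The paper does not prove this lemma; it is quoted verbatim from Theorem~12 of Arab and Oliveira~\cite{AO18}, so there is no internal proof to compare your argument against. Your proof is correct and self-contained. Identifying the failure rate of the $s$-iterated law with $-(\ln\TXs{s})'$ reduces the claim to log-concavity (when $\alpha>1$) or log-convexity (when $\alpha<1$) of $\TXs{s}$; the base case follows from $(\ln f_X)''(x)=-(\alpha-1)/x^2$ on $(0,\infty)$, and the property propagates through the recursion $\TXs{k}\propto\int_x^\infty\TXs{k-1}$ via the supporting-line inequalities you supply. Both halves of the preservation step check out: the log-concave half is the classical fact that half-line integration preserves log-concavity (a special case of Pr\'{e}kopa's theorem, as you note), and the log-convex half is correctly formulated under the extra hypotheses of strict decrease and finite integral, which you verify hold at every level of the induction -- the $\Gamma(\alpha,1)$ density with $\alpha<1$ is strictly decreasing, and each iterated tail of index $\geq1$ has strictly negative derivative $-\TXs{k-1}/\muXs{k-1}$ on $(0,\infty)$ together with a finite integral because the Gamma law has all moments. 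The reference establishes the monotonicity via sign-change arguments of the same flavour used in Lemma~\ref{SFR} and Theorem~\ref{thm:sFR} of this paper; your log-concavity/log-convexity route is more elementary and would substitute cleanly for the citation, though it is tailored to the half-line integration operator and does not by itself yield the ordering comparisons that the variation-diminishing machinery also delivers.
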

\begin{theorem}
Let $X$ be a random variable with distribution $\Gamma(\alpha,1)$, where $\alpha>0$. Then, for each $x\geq 0$ fixed, $\lim_{s\rightarrow+\infty}\TXs{s}(x)=e^{-x}$.
\end{theorem}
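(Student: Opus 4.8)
The plan is to squeeze $\TXs{s}(x)$ between $e^{-x}$ and a function of the form $e^{-r_s(0)x}$ whose exponent tends to $1$, treating the cases $\alpha=1$, $\alpha>1$ and $0<\alpha<1$ separately. The case $\alpha=1$ is immediate: the exponential is a fixed point of the iteration, so $\TXs{s}(x)=e^{-x}$ for every $s$. For the remaining cases the two tools are Theorem~\ref{thm:sFR}, which orders a Gamma law relative to the exponential in the $\sFR{s}$ sense, and Lemma~\ref{lem:thm12}, which asserts the monotonicity of the failure rate of the $s$-iterated Gamma distribution.

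Before the case analysis I would record two elementary facts, valid for $s\geq 2$. First, $\TXs{s}(0)=1$ directly from Definition~\ref{def:s-iter}, and the failure rate $r_s$ of the $s$-iterated distribution, which equals $\TXs{s-1}/(\muXs{s-1}\,\TXs{s})$, satisfies $r_s(0)=1/\muXs{s-1}$ (using $\TXs{s-1}(0)=1$); since $X\sim\Gamma(\alpha,1)$ has $\dE X^{k}=\Gamma(\alpha+k)/\Gamma(\alpha)$, formula~(\ref{eq:moms}) gives $\muXs{s-1}=(\alpha+s-2)/(s-1)$, hence $r_s(0)=(s-1)/(\alpha+s-2)\longrightarrow 1$ as $s\to+\infty$. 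Second, since for $s\geq2$ the iterated tail $\TXs{s}$ is absolutely continuous, strictly positive on $[0,\infty)$ and equal to $1$ at the origin, it admits the hazard representation $\TXs{s}(x)=\exp\bigl(-\int_0^x r_s(t)\,dt\bigr)$; consequently, if $r_s$ is nondecreasing then $\TXs{s}(x)\leq e^{-r_s(0)x}$, and if $r_s$ is nonincreasing then $\TXs{s}(x)\geq e^{-r_s(0)x}$.

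For $\alpha>1$: by Theorem~\ref{thm:sFR} the exponential law $\Gamma(1,1)$ satisfies $\Gamma(1,1)\leq_{\sFR{s}}X$, and since the $s$-iterated tails of both laws equal $1$ at $0$, monotonicity of their ratio forces $\TXs{s}(x)\geq e^{-x}$ for all $x\geq0$. On the other hand Lemma~\ref{lem:thm12} gives that $r_s$ is increasing, so the second recorded fact yields $\TXs{s}(x)\leq e^{-r_s(0)x}$; letting $s\to+\infty$ and using $r_s(0)\to1$, the squeeze gives $\TXs{s}(x)\to e^{-x}$. The case $0<\alpha<1$ is the mirror image: now $X\leq_{\sFR{s}}\Gamma(1,1)$ gives $\TXs{s}(x)\leq e^{-x}$, while Lemma~\ref{lem:thm12} says $r_s$ is decreasing, so $\TXs{s}(x)\geq e^{-r_s(0)x}$, and again $r_s(0)\to1$ closes the argument. (For $\alpha>1$ one could alternatively squeeze between the integer-shape iterated tails of Theorem~\ref{thm:gamma}, but the route above is uniform in $\alpha$.)

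I expect the routine parts — the moment identity behind $r_s(0)=(s-1)/(\alpha+s-2)$, and the passage from an $\sFR{s}$ inequality to the pointwise tail inequality via the ratio being $1$ at the origin — to be painless. The point deserving care, and the main obstacle, is the second recorded fact: one must make sure that for $s\geq2$ the iterated Gamma tail is genuinely absolutely continuous with locally integrable hazard rate even when $\alpha<1$ (it is, because its density is $\TXs{s-1}/\muXs{s-1}$, which is bounded near $0$ although the original $\Gamma(\alpha,1)$ density is not), so that the IFR/DFR conclusion of Lemma~\ref{lem:thm12} really translates into the exponential bound with exponent exactly $r_s(0)x$.
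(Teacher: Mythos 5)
Your proposal is correct, and it takes a genuinely different and more unified route than the paper. The paper handles $\alpha>1$ by squeezing $\Ts{\alpha}{s}$ between $e^{-x}$ and the $s$-iterated tail of an integer-shape Gamma, invoking Corollary~\ref{cor:Gammalim1}; for $\alpha<1$ it resorts to a considerably more delicate argument: Lemma~\ref{lem:thm12} gives monotonicity of $\Ts{\alpha}{s}(x)$ in $s$ (so a limit $g(x)\leq e^{-x}$ exists), and then an auxiliary difference $U_s=\Ts{\alpha+1}{s}-\Ts{2}{s}$ is shown, via the recursion $\Ts{\alpha+1}{s+1}=\frac{s}{\alpha+s}\Ts{\alpha+1}{s}+\frac{\alpha}{\alpha+s}\Ts{\alpha}{s+1}$ and the explicit formula~(\ref{eq:tailGamma-s}), to satisfy $U_{s+1}-U_s\leq\frac{xe^{-x}}{s(s+1)}-\frac{\alpha}{\alpha+s}(e^{-x}-g(x))$, from which $g(x)<e^{-x}$ is ruled out by contradiction. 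Your argument instead uses the same two ingredients (Theorem~\ref{thm:sFR} and Lemma~\ref{lem:thm12}) for both $\alpha>1$ and $\alpha<1$, via the clean observation that the $s$-iterated failure rate at the origin is $r_s(0)=1/\muXs{s-1}=(s-1)/(\alpha+s-2)\to1$, and that IFR (resp.\ DFR) then gives $\TXs{s}(x)\leq e^{-r_s(0)x}$ (resp.\ $\geq$), so the $\sFR{s}$ comparison with the exponential squeezes from the other side. This is shorter, dispenses entirely with the integer-shape machinery of Theorem~\ref{thm:gamma} and Corollary~\ref{cor:Gammalim1}, and treats the two hard cases symmetrically; what you lose relative to the paper's route is only the explicit monotone-in-$s$ convergence in the $\alpha<1$ case, which the paper gets for free from DFR but which is not needed for the pointwise limit. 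Your caution about absolute continuity for $\alpha<1$ is well placed but easily discharged: for $s\geq2$ the density $\TXs{s-1}/\muXs{s-1}$ is bounded, so the hazard representation is unproblematic.
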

\begin{proof}
For the purpose of this proof, we shall denote by $\Ts{\alpha}{s}(x)$ the $s-$iterated distribution induced by a Gamma random variable with shape parameter $\alpha$. Hence, $\TXs{s}(x)=\Ts{\alpha}{s}(x)$. We separate the proof into three cases, depending on the value of $\alpha$.

\begin{description}
\item[{\rm\textit{The case $\alpha=1$}.}] This means that $X$ is exponentially distributed, which is a fixed point for the iteration procedure, so the conclusion is obvious.


\item[{\rm\textit{The case $\alpha>1$}.}]
If $\alpha$ is integer, the result follows from Corollary~\ref{cor:Gammalim1}. If $\alpha$ is not an integer, choose some integer $\beta>\alpha$, and consider two random variables $X_1$ and $X_2$ with distribution $\Gamma(1,1)$ and $\Gamma(\beta,1)$, respectively. Then Theorem~\ref{thm:sFR} implies that, for every $s\geq 1$, $X_1\leq_{\sFR{s}}X\leq_{\sFR{s}}X_2$, which further implies that
$$
e^{-x}=\Ts{1}{s}(x)\leq \Ts{\alpha}{s}(x)\leq \Ts{\beta}{s}(x).
$$
so, using again Corollary~\ref{cor:Gammalim1}, the proof is concluded.


\item[{\rm\textit{The case $\alpha<1$}.}] According to Lemma~\ref{lem:thm12}, the failure rate corresponding to the $s-$iterated distribution induced by $X$ is decreasing, which implies that $\Ts{\alpha}{s}(x)\leq\Ts{\alpha}{s+1}(x)$, for every $x\geq 0$ and $s\geq 1$. Moreover, from Theorem~\ref{thm:sFR}, it follows $\Ts{\alpha}{s}(x)\leq e^{-x}$, hence the limit $g(x)=\lim_{s\rightarrow+\infty}\Ts{\alpha}{s}(x)$ exists and $g(x)\leq e^{-x}$, for every $x\geq 0$. Now, we want to prove that the latter inequality is, indeed, an equality. For this purpose, define
%
\[
U_s(x) = \Ts{\alpha+1}{s}(x)-\Ts{m+1}{s}(x),
\]
where $m$ is an integer. By applying (\ref{eq:simpleTs}), we get that
$$
\Ts{\alpha+1}{s+1}(x) =  \frac{1}{\Gamma(\alpha+s+1)}\int_{x}^{\infty}(t-x)^st^{\alpha}e^{-t}\,dt
 = \frac{s}{\alpha+s}\Ts{\alpha+1}{s}(x)+\frac{\alpha}{\alpha+s}\Ts{\alpha}{s+1}(x).
$$
Hence,
\begin{eqnarray*}
\lefteqn{U_{s+1}-U_s} \\
 & &
 =\frac{s}{\alpha+s}\Ts{\alpha+1}{s}+\frac{\alpha}{\alpha+s}\Ts{\alpha}{s+1}- \left(\frac{s}{m+s}\Ts{m+1}{s}+\frac{m}{m+s}\Ts{m}{s+1}\right) - (\Ts{\alpha+1}{s}-\Ts{m+1}{s}) \\
 & &
 =\frac{m}{m+s}\left(\Ts{m+1}{s}-\Ts{m}{s+1}\right)+\frac{\alpha}{\alpha+s}\left(\Ts{\alpha}{s+1}-\Ts{\alpha+1}{s}\right).
\end{eqnarray*}
For $m=1$ and using (\ref{eq:tailGamma-s}), the latter expression becomes
\[
U_{s+1}(x)-U_s(x) = \frac{1}{s(s+1)}xe^{-x} + \frac{\alpha}{\alpha+s}\left(\Ts{\alpha}{s+1}(x)-\Ts{\alpha+1}{s}(x)\right).
\]
Recall that $\Ts{\alpha}{s+1}\leq g(x)$, while, based again on Lemma~\ref{lem:thm12}, we have that
$\Ts{\alpha+1}{s}\geq e^{-x}$. Therefore
\begin{equation}
\label{eq:final}
U_{s+1}(x)-U_s(x)\leq \frac{1}{s(s+1)}xe^{-x} - \frac{\alpha}{\alpha+s}(e^{-x}-g(x)).
\end{equation}
Assume now that $g(x)<e^{-x}$. Then, for each $x\geq 0$ fixed and $s$ (depending on $x$) large enough, the upper bound in (\ref{eq:final}) becomes negative, implying that $U_s(x)$ is decreasing with respect to $s$. As, by definition, $U_s$ is negative valued this is not compatible with the fact that $\lim_{s\rightarrow+\infty}U_s(x)=0$. Hence, indeed $g(x) = e^{-x}$.
\end{description}
\end{proof}
For a general Gamma random variable, the following result is immediate.
\begin{corollary}
Assume $X$ is a random variable with distribution $\Gamma(\alpha,\theta)$, where $\alpha, \theta>0$. Then, for each $x\geq 0$ fixed,
$$
\lim_{s\rightarrow+\infty}\TXs{s}(x)= \lim_{s\rightarrow+\infty}\TYs{s}\left(\frac{x}{\theta}\right)=e^{-\frac{x}{\theta}},
$$
where $Y$ ie a random variable with distribution $\Gamma(\alpha,1)$.
\end{corollary}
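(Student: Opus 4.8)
The plan is to deduce this from the theorem just proved for the $\Gamma(\alpha,1)$ case via a single change of scale, so that no further analytic work is required. First I would invoke the scaling relation for iterated distributions recorded at the beginning of Section~4, which is an immediate consequence of the representation~(\ref{eq:simpleTs}): dividing a nonnegative random variable by $\theta>0$ rescales the argument of each of its iterated tails accordingly. Since $X$ with distribution $\Gamma(\alpha,\theta)$ is exactly such a rescaling of a random variable $Y$ with distribution $\Gamma(\alpha,1)$, this gives
$$
\TXs{s}(x)=\TYs{s}\!\left(\frac{x}{\theta}\right)\qquad\text{for all }s\text{ and all }x\geq 0.
$$

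Next I would apply the preceding theorem to $Y$: since $Y$ has distribution $\Gamma(\alpha,1)$ with $\alpha>0$, for each fixed $y\geq 0$ we have $\lim_{s\to+\infty}\TYs{s}(y)=e^{-y}$. For $x\geq 0$ and $\theta>0$ fixed, the number $y=x/\theta$ is a fixed nonnegative value, so substituting it into this limit yields
$$
\lim_{s\to+\infty}\TXs{s}(x)=\lim_{s\to+\infty}\TYs{s}\!\left(\frac{x}{\theta}\right)=e^{-x/\theta},
$$
which is the assertion of the corollary.

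There is essentially no obstacle here: all the substantive content sits in the preceding theorem, and what remains is a one-line change of variables. Because iterated tails take values in $[0,1]$, no integrability or interchange-of-limits subtlety arises in passing to the limit; the only point that needs a moment's care is keeping track of the parametrization so that the scale factor appears on the correct side, i.e.\ producing the argument $x/\theta$ as in the statement.
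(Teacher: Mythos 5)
Your proof is correct and matches the paper's intent exactly: the paper states the corollary as ``immediate,'' relying precisely on the scaling relation $\Ts{\frac{X}{\theta}}{s}(x)=\TXs{s}\!\left(\frac{x}{\theta}\right)$ noted at the start of Section~4 together with the preceding theorem for $\Gamma(\alpha,1)$. Nothing more is needed.
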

\begin{remark}
The characterization of the limits as the iteration step goes to infinity provides an approximation for the moments of the residual lifetime. Indeed, assuming that $X$ has distribution $\Gamma(\alpha,\theta)$ with $\alpha,\theta>0$, and by recalling (\ref{eq:simpleTs}), 
it follows from the previous discussion that, for each $x\geq 0$ fixed,
$$
\lim_{s\rightarrow+\infty}\frac{1}{\dE X^{s-1}}\dE(X-x)_+^{s-1}=e^{-\frac{x}{\theta}}.
$$
Hence, for $s$ large enough, an approximation for the higher order stop-loss transform $\dE(X-x)_+^{s-1}$, is obtained by considering $e^{-\frac{x}{\theta}}\dE X^{s-1}=e^{-\frac{x}{\theta}}\theta^{s-1}\frac{\Gamma(\alpha+s-1)}{\Gamma(\alpha)}$.
\end{remark}

\section{Iterated distributions induced by Weibull variables}
The Weibull family of distributions is also an important class in reliability theory and lifetime models. Although an explicit and closed form representation for the iterated distributions induced from this family does not seem possible, we may identify the limit behavior as the iteration step goes to $+\infty$. Recall the following representation, mentioned in the proof of Theorem~27 in the corrigendum for \cite{AO18}.
\begin{proposition}
\label{prop1}
Let $X$ be a random variable with absolutely continuous distribution with density $f_X$ and distribution function $F_X$. 
Then, for $s\geq 1$,
$$
\TXs{s}(x)=\int_x^\infty\!\int_{x_1}^\infty\!\cdots\!\int_{x_{k-1}}^\infty H_k(x_k)\,dx_{k}\cdots dx_2 dx_1,
$$
where
$$
H_k(x)=\frac{1}{\prod_{j=1}^{k}\muXs{s-j}} \TXs{s-k}(x).
$$
\end{proposition}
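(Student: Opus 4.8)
The plan is to prove the representation by a straightforward induction on $k$, for a fixed $s\geq 1$ (with $k$ ranging over $1,\dots,s$), iterating the defining recursion (\ref{eq:s-iter}) and nothing else. The base case $k=1$ is exactly (\ref{eq:s-iter}): since $H_1(x)=\frac{1}{\muXs{s-1}}\TXs{s-1}(x)$, we have
$$
\TXs{s}(x)=\frac{1}{\muXs{s-1}}\int_x^\infty \TXs{s-1}(x_1)\,dx_1=\int_x^\infty H_1(x_1)\,dx_1,
$$
which is the claimed formula for $k=1$.

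For the inductive step, assume the representation holds for some $k$ with $1\leq k<s$. The innermost integrand is $H_k(x_k)=\frac{1}{\prod_{j=1}^{k}\muXs{s-j}}\,\TXs{s-k}(x_k)$, and applying (\ref{eq:s-iter}) with $s$ replaced by $s-k\geq 1$ yields
$$
\TXs{s-k}(x_k)=\frac{1}{\muXs{s-k-1}}\int_{x_k}^\infty \TXs{s-k-1}(x_{k+1})\,dx_{k+1}.
$$
Substituting this into the induction hypothesis and noting that $\frac{1}{\prod_{j=1}^{k}\muXs{s-j}}\cdot\frac{1}{\muXs{s-k-1}}=\frac{1}{\prod_{j=1}^{k+1}\muXs{s-j}}$, because $s-(k+1)=s-k-1$, we obtain
$$
\TXs{s}(x)=\int_x^\infty\!\int_{x_1}^\infty\!\cdots\!\int_{x_k}^\infty H_{k+1}(x_{k+1})\,dx_{k+1}\cdots dx_1,
$$
which is precisely the representation at level $k+1$. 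This closes the induction.

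As for technicalities, all the iterated tails $\TXs{j}$ are nonnegative, and the finiteness hypotheses built into Definition~\ref{def:s-iter} guarantee that every integral encountered along the way is finite, so the recursion can legitimately be applied at each step down to $\TXs{s-k}$ (and, in the extreme case $k=s$, down to $\TXs{0}=f_X$ with $\muXs{0}=1$); no convergence or reordering issues arise. I do not expect a genuine obstacle here: the argument is a clean unfolding of (\ref{eq:s-iter}), and the only point needing a little care is the bookkeeping of the normalizing product $\prod_{j=1}^{k}\muXs{s-j}$, so that the index shift $s-k\mapsto s-k-1$ reproduces exactly the stated $H_{k+1}$.
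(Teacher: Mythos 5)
Your proof is correct and is essentially the paper's own argument made explicit: the paper simply says to ``replace successively'' the representation from Definition~\ref{def:s-iter}, and your induction on $k$ is exactly that unfolding, with the bookkeeping of the normalizing product $\prod_{j=1}^{k}\muXs{s-j}$ carried out carefully.
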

\begin{proof}
Replace successively the representation of each $\TXs{s}$ as an integral, as given by the definition of the iterated distributions, and the result follows immediately.
\end{proof}
We also quote here the characterization corrersponding to Lemma~\ref{lem:thm12} about Weibull distributions.
\begin{lemma}[Theorem~10 in Arab and Oliveira~\cite{AO18}]
\label{lem:thm10}
Let $X$ be a random variable with Weibull distribution with shape parameter $\alpha>0$, and $s\geq 1$ an integer. If $\alpha>1$ (resp. $\alpha<1$), then the failure rate of the $s-$iterated distribution induced by $X$ is increasing (resp. decreasing).
\end{lemma}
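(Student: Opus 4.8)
The plan is to prove the statement by induction on $s$, exploiting the elementary identity that the failure rate of the $s$-iterated distribution is the reciprocal of the mean residual life function of the $(s-1)$-iterated one. First I would reduce to the standard Weibull: since $\Ts{X/\theta}{s}(x)=\TXs{s}(x/\theta)$ for any $\theta>0$ and rescaling preserves monotonicity of failure rates, it suffices to treat $X$ with density $f_X(x)=\alpha x^{\alpha-1}e^{-x^\alpha}$ and tail $\overline{F}_X(x)=e^{-x^\alpha}$. The base case $s=1$ is immediate: by Definition~\ref{def:s-iter}, $\TXs{1}=\overline{F}_X$, with failure rate $f_X/\overline{F}_X=\alpha x^{\alpha-1}$, which is increasing for $\alpha>1$ and decreasing for $\alpha<1$.

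For the inductive step, recall from (\ref{eq:s-iter}) that the $s$-iterated distribution has density $\frac{1}{\muXs{s-1}}\TXs{s-1}$, so its failure rate at $x$ equals
$$
\frac{\TXs{s-1}(x)}{\int_x^\infty\TXs{s-1}(t)\,dt}=\frac{1}{m_{s-1}(x)},
$$
where $m_{s-1}(x)=\TXs{s-1}(x)^{-1}\int_x^\infty\TXs{s-1}(t)\,dt$ is the mean residual life function of the $(s-1)$-iterated distribution; this is well defined and finite because all moments of every Weibull variable, hence (by the moment formulas of Section~2) of every Weibull iterate, are finite. Thus the induction closes once we know that a distribution with increasing (resp. decreasing) failure rate has decreasing (resp. increasing) mean residual life, applied to the $(s-1)$-iterated distribution: the failure rate of the $s$-iterated distribution, being $1/m_{s-1}$, then has the asserted monotonicity.

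This last implication is a classical fact of reliability theory, which I would prove directly: for a distribution with tail $G$, finite mean and failure rate $r$,
$$
m(x)=\int_0^\infty\frac{G(x+t)}{G(x)}\,dt=\int_0^\infty\exp\!\left(-\int_x^{x+t}r(u)\,du\right)dt,
$$
and for each fixed $t\geq0$ the inner integral has $x$-derivative $r(x+t)-r(x)$, which is nonnegative if $r$ is increasing and nonpositive if $r$ is decreasing; consequently the integrand, and therefore $m$, moves monotonically in the direction opposite to $r$. Combining this with the reciprocal identity and the base case yields, by induction on $s$, that every $s$-iterated distribution of a Weibull variable with $\alpha>1$ has increasing failure rate and with $\alpha<1$ has decreasing failure rate.

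I expect the only delicate points to be bookkeeping rather than conceptual. One must check that each Weibull iterate is a genuine probability distribution with finite mean, so that the mean-residual-life manipulations are legitimate — this follows from the moment formulas. One should also track strictness: for $\alpha>1$ the base failure rate $\alpha x^{\alpha-1}$ is strictly increasing, and in the representation of $m$ the quantity $r(x+t)-r(x)$ is strictly positive for $t>0$ whenever $r$ is strictly increasing, so strict monotonicity propagates along the iteration through the reciprocal identity (and symmetrically for $\alpha<1$).
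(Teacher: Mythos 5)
The paper does not prove this statement; it quotes it verbatim from Arab and Oliveira (2018) and uses it as an external ingredient, so there is no internal proof to compare against. Your argument is, however, a complete and correct proof. Its engine is the identity $r_s(x)=1/m_{s-1}(x)$, where $r_s$ denotes the failure rate of the $s$-iterate and $m_{s-1}$ the mean residual life of the $(s-1)$-iterate; this follows immediately from the facts that $\muXs{s-1}^{-1}\TXs{s-1}$ is the density and $\TXs{s}(x)=\muXs{s-1}^{-1}\int_x^\infty\TXs{s-1}(t)\,dt$ the tail of the $s$-iterate. The second ingredient is the classical implication that an increasing (resp.\ decreasing) failure rate forces a decreasing (resp.\ increasing) mean residual life, which you derive from $m(x)=\int_0^\infty \exp\bigl(-\int_x^{x+t}r(u)\,du\bigr)\,dt$ by noting that for each fixed $t\ge0$ the integrand is monotone in $x$ in the direction opposite to $r$; this needs no differentiation under the integral sign, only that integration preserves pointwise monotonicity. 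With the base case $r_1(x)=\alpha x^{\alpha-1}$, strictly increasing for $\alpha>1$ and strictly decreasing for $\alpha<1$, the induction closes. Your strictness bookkeeping is also sound: $r(x+t)-r(x)$ has a strict sign whenever $r$ is strictly monotone and $t>0$, so strictness passes through $m$ and through the reciprocal. It is worth noting that the argument in fact proves something stronger than the lemma states: the iteration of Definition~\ref{def:s-iter} preserves the IFR (resp.\ DFR) property for any nonnegative absolutely continuous random variable with all moments finite; the Weibull form enters only in supplying the base case and the moment finiteness.
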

\begin{theorem}
\label{th:Weib}
Let $X$ be Weibull distributed with shape parameter $\alpha>0$. Then, for every $x>0$ fixed,
\[
\lim_{s\rightarrow+\infty}\TXs{s}(x) = \begin{cases}
0 & \mbox{if }\alpha>1,\\
e^{-x} & \mbox{if }\alpha=1, \\
1 & \mbox{if }\alpha<1.
\end{cases}
\]
\end{theorem}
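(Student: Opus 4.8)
The plan is to treat the three cases $\alpha=1$, $\alpha<1$ and $\alpha>1$ separately, exactly as was done for the Gamma family, using the monotonicity of the iterated failure rate (Lemma~\ref{lem:thm10}) to get one-sided bounds that are monotone in the iteration step $s$, so that the pointwise limit $g(x)=\lim_{s\to+\infty}\TXs{s}(x)$ exists, and then pinning down its value. The case $\alpha=1$ is immediate, since the Weibull with shape $1$ is exponential, which is a fixed point for the iteration. For $\alpha<1$, Lemma~\ref{lem:thm10} says the $s$-iterated distribution has decreasing failure rate; as in the Gamma proof this translates into $\TXs{s}(x)\leq\TXs{s+1}(x)$ for all $x$ and $s$ (a DFR tail lies above $e^{-\lambda x}$-type comparisons, and here the relevant consequence is monotonicity of $\TXs{s}$ in $s$), so $g(x)$ exists and $g(x)\le 1$. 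To show $g(x)=1$ I would argue by contradiction: if $g(x_0)<1$ at some point then the normalizing means $\muXs{s}$ would have to stay controlled, but for $\alpha<1$ the Weibull has heavy-ish tails and the ratios $\dE X^{s}/\dE X^{s-1}$ grow without bound (since $\dE X^s=\Gamma(1+s/\alpha)$ up to scale, so $\muXs{s}\to\infty$ by \eqref{eq:moms}), forcing mass to escape to infinity and hence $g\equiv 1$. The cleanest version of this is: if $g$ is a genuine (sub)survival function with $g(x_0)<1$, its associated "distribution" has finite mean bounded by something comparable to $\liminf\muXs{s}$, contradicting $\muXs{s}\to\infty$; so $g(x)=1$ for every $x>0$.

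For $\alpha>1$ the roles reverse: Lemma~\ref{lem:thm10} gives increasing failure rate for every $s$-iterate, which yields $\TXs{s+1}(x)\le\TXs{s}(x)$, so again $g(x)=\lim_s\TXs{s}(x)$ exists, now with $g(x)\le e^{-x}$-type upper bounds and $g$ nonincreasing and nonnegative. Here the means $\muXs{s}=\frac1s\dE X^s/\dE X^{s-1}$ converge to $0$ (for $\alpha>1$, $\dE X^s/\dE X^{s-1}=\Gamma(1+s/\alpha)/\Gamma(1+(s-1)/\alpha)$ grows only like $s^{1/\alpha}$, so divided by $s$ it tends to $0$), which is precisely the hypothesis identified after the Corollary in Section~2 giving $\sigma_s^2\to 0$: the iterated distributions degenerate at $0$. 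Concretely, $\muXs{s}\to 0$ together with nonnegativity forces $\TXs{s}\to 0$ pointwise on $(0,\infty)$, since $\int_0^\infty \TXs{s}(t)\,dt=\muXs{s}\to 0$ and each $\TXs{s}$ is nonincreasing in $x$, so $x\,\TXs{s}(x)\le\int_0^x\TXs{s}(t)\,dt\le\muXs{s}\to 0$; hence $g(x)=0$ for every $x>0$.

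For the subunit case I would want the analogous squeeze-from-below. The mean computation shows $\muXs{s}\to+\infty$ when $\alpha<1$, and monotonicity of $\TXs{s}$ in $s$ (from Lemma~\ref{lem:thm10}) gives existence of $g$; then the bound $\int_0^\infty\TXs{s}=\muXs{s}$ cannot blow up if $g$ were bounded away from $1$ on a set of positive measure, because $\TXs{s}\uparrow g$ and by monotone convergence $\int_0^\infty g(t)\,dt=\lim_s\muXs{s}=+\infty$, which is consistent only with $g\equiv 1$ (a nonincreasing $[0,1]$-valued function with infinite integral must be identically $1$). Proposition~\ref{prop1} can be invoked if one prefers to express $\TXs{s}$ through iterated integrals of $H_k$, but the moment route via \eqref{eq:moms} is shorter.

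The main obstacle I expect is the $\alpha<1$ lower bound: unlike the Gamma case, there is no convenient dominating family inside the Weibull class giving an $\sFR{s}$ sandwich toward a known limit, so the argument has to go through the divergence of $\muXs{s}$ and a monotone-convergence/measure argument rather than an explicit comparison; care is needed to justify that the pointwise increasing limit $g$ is measurable and that $\int_0^\infty g=\lim\muXs{s}$, and to handle the possibility that $g$ jumps to $1$ only at $0$. A secondary technical point is confirming the precise sign of the monotonicity of $\TXs{s}$ in $s$ that follows from the IFR/DFR property of each iterate (this is the same step used silently in the Gamma proof via Lemma~\ref{lem:thm12}), and computing the asymptotics of $\Gamma(1+s/\alpha)/\Gamma(1+(s-1)/\alpha)$ — routine via Stirling, but it is the quantitative heart of both the $\alpha>1$ and $\alpha<1$ conclusions.
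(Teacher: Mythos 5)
Your treatment of $\alpha=1$ matches the paper, and your argument for $\alpha>1$ is correct and in fact simpler than the paper's. You compute via \eqref{eq:moms} that $\muXs{s}=\frac{1}{s}\Gamma(1+\frac{s}{\alpha})/\Gamma(1+\frac{s-1}{\alpha})\sim \alpha^{-1/\alpha}s^{1/\alpha-1}\to 0$ when $\alpha>1$, and then use that $\TXs{s}$ is nonincreasing to get $x\,\TXs{s}(x)\le\int_0^x\TXs{s}\le\muXs{s}\to 0$, so $\TXs{s}(x)\to 0$ for each $x>0$. The paper instead invokes Proposition~\ref{prop1} with $k=s-1$, bounds the nested integrals by repeated integration by parts, and applies Stirling to show the resulting upper bound vanishes; your route avoids the iterated-integral machinery entirely and is a cleaner way to the same conclusion. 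The monotonicity in $s$ you extract from Lemma~\ref{lem:thm10} (DFR of the iterate gives $\TXs{s-1}(x)\le\TXs{s}(x)$ by comparing the hazard at $x$ with its value at $0$, namely $1/\muXs{s-1}$) is also correctly derived and is exactly the step the paper uses implicitly in the Gamma case via Lemma~\ref{lem:thm12}.

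The $\alpha<1$ case, however, has a genuine gap. After establishing that $\TXs{s}(x)\uparrow g(x)$ and that, by monotone convergence, $\int_0^\infty g=\lim_s\muXs{s}=+\infty$, you conclude that ``a nonincreasing $[0,1]$-valued function with infinite integral must be identically $1$.'' That is false: $g(t)=\min(1,1/t)$ is nonincreasing, takes values in $[0,1]$, has infinite integral, and is certainly not $\equiv 1$. The divergence $\muXs{s}\to\infty$ is therefore consistent with $g(x_0)<1$ at every fixed $x_0>0$, and no amount of care about measurability or about a possible jump at $0$ rescues the inference; you simply do not have enough constraints on $g$. This is precisely where the paper has to work harder: it fixes an arbitrary $\beta>0$, studies the sign variation of $V_s(x)=\TXs{s+1}(x)-e^{-\beta x}$ through $V_s'$ and the function $H_{s-1}$, uses Stirling to show that $A(s)=s\,\Gamma(1+\frac{s-1}{\alpha})/\Gamma(1+\frac{s}{\alpha})\to 0$ so that $V_s'(0)\ge 0$ for $s$ large, and deduces that the only admissible sign pattern for $V_s'$ is ``$+,-$'', whence $V_s\ge 0$, i.e.\ $\TXs{s+1}(x)\ge e^{-\beta x}$ for all large $s$. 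Letting $\beta\downarrow 0$ then forces $g\equiv 1$. You need some explicit lower bound of this kind (or a comparison argument supplying one); the moment/monotone-convergence route alone cannot close this case.
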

\begin{proof}
We need to separate the proof into three different cases, depending on the value for $\alpha$. As mentioned before, it is enough to treat the case where the hazard rate is 1.

\begin{description}
\item[{\rm\textit{The case $\alpha=1$}.}] This is obvious, as this Weibull distribution becomes the exponential distribution.

\item[{\rm\textit{The case $\alpha >1$}.}] By taking $k=s-1$ in the representation given in Proposition~\ref{prop1}, we get
\begin{equation}
\label{eq:rep}
\TXs{s}(x)=\frac1{\dE X^{s-1}}
\int_x^\infty\!\int_{x_1}^\infty\!\cdots\!\int_{x_{s-2}}^\infty\overline{F}_X(x_{s-1})\,dx_{s-1}\cdots dx_2 dx_1.
\end{equation}
We may bound the inner integral above:
\begin{eqnarray*}
\lefteqn{
\int_{x_{s-2}}^\infty \overline{F}_X(x_{s-1})\,dx_{s-1} =
  \int_{x_{s-2}}^\infty e^{-x_{s-1}^\alpha}\,dx_{s-1}} \\
  & &
   =  \frac{e^{-x_{s-2}^\alpha}}{\alpha x_{s-2}^{\alpha-1}}-\frac{\alpha-1}{\alpha}\int_{x_{s-2}}^\infty \frac{e^{-x_{s-1}^\alpha}}{x_{s-1}^\alpha}\,dx_{s-1}
   \leq \frac{e^{-x_{s-2}^\alpha}}{\alpha x_{s-2}^{\alpha-1}}.
\end{eqnarray*}
The next integral in the representation (\ref{eq:rep}) is
$$
\int_{x_{s-3}}^\infty\int_{x_{s-2}}^\infty \overline{F}_X(x_{s-1})\,dx_{s-1}dx_{s-2}
\leq
\int_{x_{s-3}}^\infty\frac{e^{-x_{s-2}^\alpha}}{\alpha x_{s-2}^{\alpha-1}}\,dx_{s-2}.
$$
Multiplying and dividing this integrand by $\alpha x_{s-2}^{\alpha-1}$, and integrating by parts, we find the upper bound
$$
\int_{x_{s-3}}^\infty\int_{x_{s-2}}^\infty \overline{F}_X(x_{s-1})\,dx_{s-1}dx_{s-2}
\leq
\frac{e^{-x_{s-3}^\alpha}}{\alpha^2 x_{s-3}^{2(\alpha-1)}}.
$$
Iterating now this argument, it follows that
$$
\TXs{s}(x)\leq \frac1{\dE X^{s-1}} \frac{e^{-x^\alpha}}{\alpha^{s-1} x^{(s-1)(\alpha-1)}}.
$$
Remember that $\dE X^{s-1}=\Gamma(1+\frac{s-1}{\alpha})$, and using Stirling approximation for Gamma function, $\lim_{x\rightarrow+\infty}\frac{\Gamma(1+x)}{\sqrt{2\pi x}\left(\frac{x}{e}\right)^x} =1$, we get
\begin{eqnarray*}
\dE X^{s-1}\alpha^{s-1} x^{(s-1)(\alpha-1)}&\approx& \sqrt{2\pi\frac{s-1}{\alpha}}\left( \frac{s-1}{e\alpha}\right)^{\frac{s-1}{\alpha}}\alpha^{s-1} x^{(s-1)(\alpha-1)}\\
 & \approx & \sqrt{2\pi\frac{s-1}{\alpha}}\left(\left( \frac{s-1}{e\alpha}\right)^{\frac{1}{\alpha}}\alpha x^{\alpha-1}\right)^{s-1},
\end{eqnarray*}
which goes to $+\infty$ when $s\longrightarrow+\infty$, consequently, $\lim_{s\rightarrow+\infty}\TXs{s}(x)=0$.

\item[{\rm\textit{The case $\alpha <1$}.}]
First, we study the sign variation of
\[
V_s(x) = \TXs{s+1}(x) - e^{-\beta x}
 =\frac{1}{\Gamma(1+\frac{s}{\alpha})}\int_{x}^{\infty}(t-x)^{s-1}t^{\alpha-1}e^{-t^\alpha}- e^{-\beta x},
\]
where 
$\beta\geq 0$.
We will describe the sign variation of 
\[
V^\prime_s(x) = \beta e^{-\beta x} - s\frac{\Gamma\left(1+\frac{s-1}{\alpha}\right)}{\Gamma\left(1+\frac{s}{\alpha}\right)}\TXs{s}(x).
\]
If we represent $V_s^\prime$ analogously as done in Proposition~\ref{prop1}, its sign variation may be described from the sign variation of
$$
H_{s-1}(x)
  =\frac{\beta^s}{(s-1)!}e^{-\beta x}-\frac{s}{\Gamma\left(1+\frac{s}{\alpha}\right)}e^{-x^\alpha}
  = \frac{s}{\Gamma\left(1+\frac{s}{\alpha}\right)}e^{-\beta x}
     \left(\frac{\beta^s}{s!}\Gamma\left(1+\frac{s}{\alpha}\right)-e^{\beta x-x^{\alpha}}\right),
$$
which coincides with the sign variation of the large parenthesis above.
%
Since $\alpha<1$ the function $e^{\beta x-x^{\alpha}}$ has a minimum and therefore the possible sign variations for $H_{s-1}$ are ``$-,+,-$'',``$-$'' or ``$+,-$''. Hence the possible sign variations for $V^{\prime}_s$ are ``$-,+,-$'', ``$+,-$'' or ``$-$''. Recalling that $V_s(0)=0$ and $\lim_{x\rightarrow+\infty}V_s(x)=0^{+}$, the latter sign variation is not compatible. In order to decide among the two remaining possible sign variations for $V_s^\prime$, ``$-,+,-$'' or ``$+,-$'', we start by remarking that
\[
V^\prime_s(0) = \beta - \frac{s\Gamma\left(1+\frac{s-1}{\alpha}\right)}{\Gamma\left(1+\frac{s}{\alpha}\right)}=\beta-A(s).
\]
Now, we use the Stirling approximation to describe the asymptotic behavior of $A(s)$, as $s\longrightarrow+\infty$. So, assuming that $s$ is large enough, we have that
\begin{eqnarray*}
\label{stirling}A(s)& \approx &
\frac{s\sqrt{2\pi\frac{s-1}{\alpha}}\left(\frac{s-1}{\alpha e}\right)^{\frac{s-1}{\alpha}}}{\sqrt{2\pi\frac{s}{\alpha}}\left(\frac{s}{\alpha e}\right)^{\frac{s}{\alpha}}} \approx
s\left(\left(1-\frac{1}{s}\right)^{s-1}\frac{e\alpha}{s}\right)^{1/\alpha}\\
 &\approx & s\left(\frac{1}{e}\frac{e\alpha}{s}\right)^{1/\alpha}= \frac{\alpha^{1/\alpha}s}{s^{1/\alpha}},
\end{eqnarray*}
hence $\lim_{s\rightarrow+\infty}A(s)=0$. Moreover, it is easy to verify that $A(s)$ eventually becomes decreasing. Therefore, it follows that, for $s\geq s_0$,  $V^\prime_s(0)\geq 0$, ruling out the sign variation ``$-,+,-$''. 
Thus, the only possible sign variation for $V^\prime_s$ is ``$+,-$'' which implies that $V_s(x)\geq 0$. In other words, $\TXs{s}\geq e^{-\beta x}$ for sufficiently large $s$ and by choosing $\beta$ sufficiently close to zero, this means that $\lim_{s\rightarrow+\infty}\TXs{s}\longrightarrow 1$.
\end{description}
\end{proof}
\begin{remark}
The result of Theorem \ref{th:Weib} provides upper and lower bounds for the higher order stop-loss transform of a random variable $X$ with a Weibull distribution. In the case where the shape parameter is $\alpha<1$, then for sufficiently large $s\geq s_0$ and $\beta_{s_0}>0$ we have that
\[
e^{-\beta_{s_0} x}\Gamma\left(1+\frac{s}{\alpha}\right)\leq \dE(X-x)^s_+\leq \Gamma\left(1+\frac{s}{\alpha}\right),
\]
where the notation $\beta_{s_0}$ highlights the dependence between the two parameters. For the case of $\alpha>1$, the moments of the residual lifetime approach zero as $s\longrightarrow+\infty$.
\end{remark}

\newpage

\end{document}